\setlist[enumerate]{leftmargin=.5in}
\setlist[itemize]{leftmargin=.5in}
\crefname{hypothesis}{Hypothesis}{Hypotheses}
\title{Numerical Stability of the Nystr\"om Method\thanks{Submitted to the editors DATE.
\funding{AB is member of the INdAM Research Group GNCS and supported by the UK's Engineering and Physical Sciences Research Council (EPSRC grant EP/Z533786/1), YN is supported by EPSRC grants EP/Y010086/1 and EP/Y030990/1. TP is supported by the RandESC project, funded by the Swiss Platform for Advanced Scientific Computing (PASC).}}}
\author{Alberto Bucci\thanks{School of Mathematics, The University of Edinburgh, Edinburgh, EH9 3FD, UK
  (\email{abucci2@ed.ac.uk}).}
\and Yuji Nakatsukasa\thanks{Mathematical Institute, University of Oxford, Oxford, OX2 6GG, UK (\email{nakatsukasa@maths.ox.ac.uk}).}
\and Taejun Park\thanks{Institute of Mathematics, EPF Lausanne, 1015 Lausanne, Switzerland (\email{taejun.park@epfl.ch}). Part of the work was done while the author was at the University of Oxford.}}
\DeclareMathOperator{\diag}{diag}
\def\Pi{\prod}
\def\Ah{\widehat A}
\def\At{\widetilde A}
\def\Nystrom{Nystr{\"o}m }
\newcommand{\mbb}{\mathbb}
\newcommand{\mach}{u_*}
\renewcommand{\Re}{\mbb{R}}
\newcommand{\norm}[1]{\left\|{#1}\right\|}
\newcommand{\lowrank}[2]{\llbracket {#1} \rrbracket_{#2}}
\newcommand{\nyseps}{A_N^{(\epsilon)}}
\newcommand{\ignore}[1]{}
\begin{document}

\maketitle

\begin{abstract}
The Nystr\"om method is a widely used technique for improving the scalability of kernel-based algorithms, including kernel ridge regression, spectral clustering, and Gaussian processes. Despite its popularity, the numerical stability of the method has remained largely an unresolved problem. In particular, the pseudo-inversion of the submatrix involved in the Nystr\"om method may pose stability issues as the submatrix is likely to be ill-conditioned, resulting in numerically poor approximation. In this work, we establish conditions under which the Nystr\"om method is numerically stable. We show that stability can be achieved through an appropriate choice of column subsets and a careful implementation of the pseudoinverse. Our results and experiments provide theoretical justification and practical guidance for the stable application of the Nystr\"om method in large-scale kernel computations.
\end{abstract}

\begin{keywords}
Nystr\"om method, stability analysis, locally maximum-volume indices, maximum-volume indices, kernel methods
\end{keywords}

\begin{MSCcodes}
15A23, 65F55, 65G50
\end{MSCcodes}

\section{Introduction} \label{sec:intro}
Low-rank techniques play an important role in improving scalability of many algorithms. Matrices that are low-rank are ubiquitous across data science and computational sciences \cite{UdellTownsend2019}. Among these techniques, the Nystr\"om method \cite{nystrom} stands out as a key tool in machine learning \cite{GM} for enhancing scalability in applications such as kernel ridge regression \cite{AlaouiMahoney2015}, kernel support vector machine \cite{CortesMohriTalwalkar2010}, spectral clustering \cite{WGM}, Gaussian processes \cite{SeegarWilliams,gpref},
and manifold learning \cite{TalwalkarKumarMohriRowley2013}. Beyond machine learning, the Nystr\"om approximation also finds use in statistics \cite{BelabbasWolfe2009}, signal processing \cite{ParkerWolfeTarokh2005}, and computational chemistry \cite{EpperlyTroppWebber2024}. 

Given a symmetric positive semi-definite (SPSD) matrix $A\in \Re^{n\times n}$, the Nystr\"om method \cite{SeegarWilliams,nystrom,GM} constructs a rank-$r$ approximation of $A$ as:
\begin{equation*}
    A \approx A_N:= AS(S^TAS)^\dagger S^T\!A,
\end{equation*} where $S \in \Re^{n\times r}$ is a column-sampling matrix
that selects a subset of $A$'s columns, that is, $S$ is a column submatrix of the identity matrix $I$, and $^\dagger$ denotes the Moore-Penrose pseudoinverse.
The approximation depends on the pseudoinverse of the core matrix $S^TAS$, which is almost always ill-conditioned for matrices well-approximated by a low-rank matrix. This may result in reduced accuracy due to roundoff errors, particularly if the implementation is not carefully designed. While these issues are not always critical, they can degrade the quality of the approximation to the point where we lose all accuracy, making the approximation pointless.

\begin{algorithm}
\caption{First implementation of Nystr\"om's method 
}
\label{alg:NaiveNystrom}
\begin{algorithmic}[1]
\Require{Symmetric positive semidefinite matrix $A\in \Re^{n\times n}$, a sketching matrix $S\in \Re^{n\times r}$}
\Ensure{
$B\in \Re^{n\times r}$ such that $A\approx BB^T$ }
\State Compute $C \gets AS$, $W \gets S^T\!C$,
\State Compute Cholesky factor $R^T R = W$,
\State Solve $B = C/R$
\end{algorithmic}
\end{algorithm}

To investigate potential numerical issues, we examine the implementation of the Nystr\"om method presented in \Cref{alg:NaiveNystrom}. In exact arithmetic, the algorithm produces $A_N$ exactly. However, in finite-precision arithmetic, roundoff errors may render the procedure numerically unstable. In particular, Step $2$ of \Cref{alg:NaiveNystrom}, which involves computing the pseudoinverse of $W$, is a potential source of instability when $W$ is highly ill-conditioned, which is typically the case when $A$ is well-approximated by a low-rank matrix. This motivates the need for an alternative algorithm and a numerically robust implementation of the Nystr\"om method.

To illustrate the issue, consider the matrix $A = \diag(1,\gamma^2,0)$, where $\gamma$ is on the order of machine precision. In exact arithmetic, selecting the first two columns of $A$ yields the optimal Nystr\"om approximation of $A$:
\begin{align*}
    \norm{A-A_N} &= \norm{\begin{bmatrix}
        1 & 0 & 0 \\ 0 & \gamma^2 & 0 \\ 0 & 0 & 0
    \end{bmatrix} - \begin{bmatrix}
        1 & 0 \\ 0 & \gamma^2 \\ 0 & 0
    \end{bmatrix} \begin{bmatrix}
        1 & 0 \\ 0 & \gamma^2 
    \end{bmatrix}^\dagger \begin{bmatrix}
        1 & 0 & 0 \\ 0  & \gamma^2 & 0
    \end{bmatrix}} = 0. 
\end{align*} However, in Step 2 if we get a slightly perturbed Cholesky factor due to roundoff errors, we get
\begin{equation*}
    \hat{R} = \begin{bmatrix}
        1 & 0 \\ 0 & \gamma + \tilde{\gamma}
    \end{bmatrix},
\end{equation*} where $\tilde{\gamma}$ is in the order of machine precision, instead of $R = \diag(1,\gamma)$ in exact arithmetic. Note that $\hat{R}^T\hat{R} = R^TR + \text{O}(\gamma^2) = S^T\!AS +\text{O}(\gamma^2)$, so $\hat{R}$ is a good substitute for $R$, in particular it is a backward stable Cholesky factor of $S^TAS$. With $\hat{R}$ we obtain
\begin{equation*}
    \hat{B} = AS\hat{R}^{-1} = \begin{bmatrix}
        1 & 0 \\ 0 & \gamma^2 \\ 0 & 0
    \end{bmatrix} \begin{bmatrix}
        1 & 0 \\ 0 & \gamma + \tilde{\gamma}
    \end{bmatrix}^{-1} = \begin{bmatrix}
        1 & 0 \\ 0 & \frac{\gamma^2}{\gamma+\tilde{\gamma}} \\ 0 & 0
    \end{bmatrix}.
\end{equation*}
Let the computed Nystr\"om approximation be $\widehat{A}_N:=\hat{B}\hat{B}^T$. Then we get
\begin{align*}
    \norm{A-\widehat{A}_N} = \norm{A-\hat{B}\hat{B}^T} &= \left|\gamma^2 - \frac{\gamma^4}{(\gamma+\tilde{\gamma})^2}\right|.
\end{align*}
Now set $\tilde{\gamma} = - (1-\delta)\gamma$ for $\delta \in (0,0.5)$ so that $\tilde{\gamma} = \text{O}(\gamma)$, then 
\begin{equation*}
    \norm{A-\widehat{A}_N} = \gamma^2\left(\frac{1}{\delta^2} - 1\right) = \text{O}\!\left(\frac{\gamma^2}{\delta^2}\right),
\end{equation*} making the Nystr\"om error arbitrarily large as $\delta \rightarrow 0$. Therefore, the Nystr\"om approximation may give an extremely poor approximation if the pseudoinverse of the core matrix is not computed with care.
Essentially, the issue arises from very small entries in the diagonal of the Cholesky factor, making the Nystr\"om approximation arbitrarily large when solving $C/R$ in line $3$ of \Cref{alg:NaiveNystrom}. This issue has also been raised in prior works; see \Cref{subsec:existing}.

In this work, we propose an alternative stabilization strategy based on $\epsilon$-truncation of the singular values of the core matrix, $S^T\!AS$. By discarding singular values below a prescribed tolerance $\epsilon$, this method ensures that the singular values of the core matrix remain sufficiently large before computing its pseudoinverse. We refer to this procedure as the stabilized Nystr\"om method, formally defined as follows: 
\begin{equation*}
    \nyseps := AS(S^T\!AS)_\epsilon^\dagger S^T\!A,
\end{equation*} where $(S^T\!AS)_\epsilon$ denotes the truncated version of $S^T\!AS$, obtained by setting all singular values smaller than $\epsilon$ to zero. This stabilized formulation is central to our subsequent analysis. In particular, we investigate its approximation accuracy and establish its numerical stability guarantees under a carefully designed implementation, as outlines in \Cref{Alg:stableNystrom}. 

\Cref{Alg:stableNystrom} presents our implementation of Nystr\"om method, which differs fundamentally from \Cref{alg:NaiveNystrom} in that it may produce a different output even in exact arithmetic. The main difference lies in Step $2$, where we replace the standard Cholesky factor with an $\epsilon$-truncated Cholesky factor. In this approach, the Cholesky process is terminated once the largest remaining diagonal element falls below the threshold $\epsilon$.
In our implementation, this procedure is performed in MATLAB using the $\texttt{cholp}$ function from the Matrix Computation Toolbox \cite{cholpcode}. Similar to \Cref{alg:NaiveNystrom}, \Cref{Alg:stableNystrom} provides the option to retain the large low-rank factor as a subset of columns of $A$ by skipping Step $3$ and outputting $C$ and $R_{\epsilon}$ as the low-rank factors.

\begin{algorithm}
\caption{Stable implementation of Nystr\"om's method}
\label{Alg:stableNystrom}
\begin{algorithmic}[1]
\Require{Symmetric positive semidefinite matrix $A\in \Re^{n\times n}$, a sketching matrix $S\in \Re^{n\times r}$, truncation parameter $\epsilon$ (recommendation: $\epsilon = 10u$)} 
\Ensure{
$B\in \Re^{n\times r}$ such that $A\approx BB^T$ }
\State Compute $C\gets AS$, $W \gets S^T\!AS$,
\State Compute $\epsilon$-truncated Cholesky factor $R_\epsilon^T R_\epsilon \gets W$ where $R_{\epsilon} \in \Re^{\hat{r} \times r}$ with $\hat{r} \leq r$,
\State Compute $B \gets CR_{\epsilon}^\dagger$ via backward stable solve for the overdetermined system
\end{algorithmic}
\end{algorithm}

\subsection{Existing methods} \label{subsec:existing}
Prior works have investigated the potential numerical instability of the Nystr\"om method when computing the pseudoinverse of the core matrix $S^T\!AS$.
In particular, Tropp et al. \cite{TroppYurtseverUdellCevher2017spsd} proposed introducing a small shift to the core matrix before computing its pseudoinverse; see Algorithm 3 in \cite{TroppYurtseverUdellCevher2017spsd}. The small shift ensures that the singular values of $S^T\!AS$ remain sufficiently larger than the unit roundoff, thereby improving numerical stability while preserving the spectral decay of $A$. 
Carson and Dau\u zickait\. e \cite{CarsonDauvzickaite2024} analyzed the numerical stability of this approach. While this strategy appears to be numerically stable in practice, their analysis is limited: the 
bound suggests that
roundoff error is amplified approximately by $\sigma_1(A)/\sigma_r(A)$, which can be extremely large for matrices with fast decaying singular values; this is often the case when the matrix is well-approximated by a low-rank matrix.
Moreover, the method requires computing the QR factorization of an $n\times r$ matrix, and the resulting low-rank factors cannot be kept as subsets of the original matrix, which can be considered a drawback. To our knowledge, shifting has been the only technique proposed to address the potential instability of the Nystr\"om method. We note that the shifted formulation does not yield the exact Nystr\"om approximation in exact arithmetic, as the introduced shift slightly perturbs the eigenvalues of the core matrix.

Related approaches have been used in the broader context of low-rank approximations. One common strategy is the use of the $\epsilon$-pseudoinverse, which truncates singular values below a threshold $\epsilon$ before (pseudo)inversion. For example, Chiu and Demanet \cite{curtrunc} applied this idea to the CUR decomposition to improve numerical stability, and subsequent analyses \cite{pn24cur} derived stability bounds for the CUR decomposition using an $\epsilon$-pseudoinverse. This idea has also been applied to generalized Nystr\"om method \cite{nakatsukasa2020fast}.

\subsection{Contributions} \label{subsec:contribution}
In this work, we study the numerical stability of the Nystr\"om method with \emph{locally maximum-volume indices} (see \Cref{def:maxvol}) by employing the $\epsilon$-pseudoinverse. We propose a numerically stable approach for computing the Nystr\"om approximation that truncates small singular values of the core matrix before taking its pseudoinverse as outlined in \Cref{Alg:stableNystrom}. This strategy avoids the numerical instabilities associated with standard pseudoinversion and eliminates the need for additional shifting strategies, while maintaining the accuracy of the approximation.

We provide stability analysis showing that the roundoff error of our approach is independent of the condition number of the original matrix $A$, ensuring high accuracy without stability concerns. 
This means one can safely use low-precision arithmetic 
to improve speed (e.g. single precision instead of the standard double precision), 
if a low accuracy is sufficient for the application, which is often the case in data science. This was suggested in~\cite{CarsonDauvzickaite2024}\footnote{
Carson and Dau\v{z}ickaitė~\cite{CarsonDauvzickaite2024} consider the use of mixed precision in implementing Nystr\"om's method, in particular using lower-precision in  computing the matrix-matrix multiplication $AS$. Here we do not explore this, partly for simplicity, and partly because in our context where $S$ is a subsampling matrix, $AS$ is simply a column submatrix of $A$, and hence can often be computed exactly.}, but our bounds crucially removes the dependence on the conditioning of $A$ or $\lowrank{A}{r}$, where $\lowrank{A}{r}$ is the best rank-$r$ approximation to $A$. 
While our analysis focuses on locally maximum-volume indices, we expect similar stability guarantees for any well-chosen set of representative indices such as \cite{RPChol}.

Furthermore, our method is computationally efficient and can preserve the original matrix structure such as sparsity or nonnegativity in the factors, by taking $S$ to be a subset selection matrix. 
In comparison to the shifting strategies, we avoid the QR factorization of an $n\times r$ matrix, allowing our algorithm to scale effectively to large matrices. Also, by relying on subsets of the original data, it retains interpretability and data fidelity. This combination of stability, efficiency and structural preservation makes our approach particularly suitable for large-scale computing.

\subsection{Notation}
Throughout, we use $\norm{\cdot}_2$ for the spectral norm or the vector-$\ell_2$ norm, $\norm{\cdot}_F$ for the Frobenius norm, and $\norm{\cdot}_*$ for the trace norm. We use dagger $^\dagger$ to denote the pseudoinverse of a matrix and $\lowrank{A}{r}$ to denote the best rank-$r$ approximation to $A$ in any unitarily invariant norm, i.e., the approximation derived from truncated SVD~\cite[\S~7.4.9]{hornjohnson}. Unless otherwise specified, $\sigma_i(A)$ denotes the $i$th largest singular value of the matrix $A$. We use MATLAB-style notation for matrices and vectors. For example, for the $k$th to $(k+j)$th columns of a matrix $A$ we write $A(:,k:k+j)$. Lastly, we use $|J|$ to denote the cardinality of the index set $J$ and define $[n]:=\{1,2,...,n\}$.

\section{Overview of the Nystr\"om method and Kernel matrices} \label{subsec:nystromKernel}
The Nystr\"om method is widely used in data science to efficiently approximate kernel matrices \cite{drineasmahoney05,GM,WGM}, especially in scenarios where computing and storing the full kernel matrix is computationally prohibitive. By leveraging low-rank approximations, the Nystr\"om method improves the scalability of kernel-based machine learning algorithms in both memory and computational costs. This section provides an overview of the Nystr\"om method and kernel matrices.

Given $n$ $d$-dimensional data points $z_1,z_2,...,z_n \in \Re^d$ and a kernel function $\kappa: \Re^d \times \Re^d \to \Re$, the kernel matrix of $\kappa$ with respect to $z_1,z_2,...,z_n$ is defined by
\begin{equation*}
    A_{ij} = \kappa(z_i,z_j).
\end{equation*} Appropriate choices of $\kappa$ ensure that $A$ is SPSD and we interpret the kernel function as a measure of similarity between a pair of data points. 
In this case, the kernel function determines a feature map $\Phi: \Re^d \to \mathbb{H}$ such that $A_{ij} = \langle\Phi(z_i),\Phi(z_j)\rangle_{\mathbb{H}}$. Here, $\mathbb{H}$ is a Hilbert space and dim$(\mathbb{H})$ can be much larger than $n$ or even infinite. Therefore, kernel matrices implicitly compute inner products in a high-dimensional feature space without explicitly transforming the data. 
When data are mapped to a high-dimensional feature space, it is often easier to extract nonlinear structure in the data. Machine learning algorithms where this is exploited through kernel matrices include kernel SVM \cite{SVMBook}, kernel ridge regression \cite{kernelbook2002} and kernel k-means clustering \cite{WGM}. Popular kernels include polynomial kernels, Gaussian kernels, and Sigmoid kernels. See, for example, \cite{kernelbook2002, HofmannScholkopfSmola2008kernelML} for details. 

For a given dataset, constructing the kernel matrix involves evaluating the kernel entries, a process that becomes increasingly expensive as $n$ and $d$ become large. This issue is particularly problematic when the kernel function is computationally expensive to compute. To address this, choosing $S\in \Re^{n\times r}$ as a column subsampling matrix in the Nystr\"om method has proven to be an effective strategy for improving the scalability of kernel methods. When $S$ is a column subsampling matrix, $AS$ is an $r$-subset of the columns of $A$, and $S^T\!AS$ is an $r\times r$ principal submatrix of $A$.  Consequently, the Nystr\"om approximation requires only $nr$ kernel evaluations instead of the full $n^2$. Therefore, we focus on column subsampling matrices in this work.\footnote{Other popular choices for $S$ include random embeddings such as Gaussian embeddings and sparse maps; see \cite{MartinssonTropp2020,GM}.}

There are several ways of choosing the important columns for the Nystr\"om approximation. The methods used to choose columns significantly impact the accuracy of the Nystr\"om method. There are numerous practical algorithms in the literature for constructing a subsampling matrix, including uniform sampling \cite{SeegarWilliams,drineasmahoney05}, leverage score sampling \cite{DrineasMahoneyMuthukrishnan2008,GM,MahoneyDrineas2009}, determinantal point process (DPP) sampling \cite{DPPsampling}, volume-based methods \cite{Massei2022}, and pivot-based strategies such as greedy pivoting \cite{GreedyChol}, adaptive randomized pivoting \cite{CortinovisKressner2024}, and randomly-pivoted Cholesky \cite{RPChol}. Notably, certain methods produce columns that provide close-to-optimal (quasi optimal) guarantees \cite{Massei2022,CortinovisKressner2024}:
\begin{equation}\label{eq:nystacc}
    \norm{A-A_N}_* \leq (1+r) \norm{A-\lowrank{A}{r}}_*.
\end{equation} It is worth noting that the suboptimality in~\eqref{eq:nystacc} does not depend on $n$; 
 the existence of a Nystr\"om approximation satisfying~\eqref{eq:nystacc} for an arbitrary PSD matrix $A$ is arguably remarkable.

Although many effective strategies exist for identifying the important columns of an SPSD matrix, this work focuses on \emph{locally maximum-volume indices} (locally max-vol indices) \cite{Massei2022,GOSTZ18}. The formal definition is given below.


\begin{definition} \label{def:maxvol}
    Let $A\in \Re^{n\times n}$ be an SPSD matrix. A subset $J\subseteq [n]$ with $|J| = r$ is called a set of locally max-vol indices if
    \begin{equation*}
       \mathrm{det}(A(\hat{J},\hat{J})) \leq  \mathrm{det}(A(J,J))
    \end{equation*} for every $\hat{J}\subseteq [n]$ with $|J\cap \hat{J}| = r-1$, and $|\hat{J}|=r$.
\end{definition}
Since $A$ is SPSD, $\mathrm{det}(A(J,J))$ equals the volume of the submatrix $A(J,J)$. A locally max-vol index set is therefore locally optimal in the sense that its volume cannot be increased by replacing a single index.

Locally max-vol indices enjoy several useful approximation guarantees. In particular, they satisfy the following quasi-optimal bound in the max-norm \cite[Lemma 1]{Massei2022}:
\begin{equation*}
    \norm{A-A_N}_{\max} \leq (1+r)\sigma_{r+1}(A),
\end{equation*} where $\norm{\cdot}_{\max}$ denotes the maximum absolute entry of the argument.

Compared with globally maximum-volume (max-vol) indices, i.e., 
\begin{equation*}
    J_* \in\underset{J\in [n],|J|=r}{\mathrm{argmax}}\mathrm{det}(A(J,J)),
\end{equation*} locally max-vol indices offer important practical advantages. Constructing a globally max-vol set (or approximate max-vol set) is NP-hard \cite{civrilmagdon-ismail09} and therefore computational infeasible in general, whereas verifying and improving a set of indices to a locally max-vol index can be done in $\text{O}(nr^2\log r)$ time \cite{Massei2022}. Locally max-vol indices also yield high-quality low-rank approximations of SPSD matrices. Since locally max-vol indices are highly effective, the analysis in this work serves as a good proxy for understanding the behaviour of any high-quality set of indices.

In this paper we focus on a locally max-vol index set $J$ and its associated subsampling matrix $S$, defined so that $A(:,J) = AS$. We establish stability of the resulting Nystr\"om's method, when implemented as in \Cref{Alg:stableNystrom}. Although the (locally) max-vol choice has historically been the gold standard, leading to a number of desirable properties, it does not necessarily minimize the error $\|A-A_N\|_*$, and is not guaranteed to satisfy~\eqref{eq:nystacc}. Other choices of (still subsampling) $S$ have been introduced to satisfy such bounds \cite{CortinovisKressner2024,osinsky,DPPsampling}. Nevertheless, we believe that, just as with (locally) max-vol $S$, Nystr\"om method remains stable under any `good' choice of $S$ when implemented according to \Cref{Alg:stableNystrom}.

\section{Analysis of the stabilized Nystr\"om method in exact arithmetic}

Here we analyze the approximation accuracy (or error) of the \Nystrom method with $\epsilon$ pseudoinverse, which from now on we will call stabilized \Nystrom (SN), assuming that the subsampling matrix $S$ is a locally max-vol subsampling for $A$. The analysis is first carried out in exact arithmetic, neglecting round-off errors, while stability in finite precision will be addressed in the following section.

The structure of the analysis closely follows \cite{nakatsukasa2020fast}, where the generalized \Nystrom is analyzed. However, the techniques we employ will differ as matrices with different properties are involved, and unlike generalized \Nystrom where two \emph{independent} sketches of $A$ are employed, in Nystr\"om we use a single sketch (and hence is a case of generalized \Nystrom where the sketches are highly dependent).

\subsection{Accuracy of Stabilized Nystr\"om}
In this section, we provide a bound for the accuracy of the stabilized version of the Nystr\"om method for SPSD matrices under the assumption that $S$ is a locally max-vol subsampling matrix.
The stabilized version of the Nystr\"om method is given by
\begin{equation*}
    A_{N}^{(\epsilon)} = AS(S^T\!AS)_{\epsilon}^\dagger S^T\! A,
\end{equation*} where $S\in \mathbb{R}^{n\times r}$ is a locally max-vol subsampling matrix for $A$ with $r$ as the target rank, and its associated error is given by
\begin{equation*}
    A-A_{N}^{(\epsilon)} = (I-AS(S^T\!AS)_{\epsilon}^\dagger S^T)A = (I-P_{AS,S}^\epsilon)A,
\end{equation*}
where we set $P^\epsilon_{AS,S}:=AS(S^T\!AS)_{\epsilon}^\dagger S^T$. 
Recall that $P$ is an (oblique) projector if and only if $P^2 = P$ (and $P$ is nonsymmetric). To verify that $P^\epsilon_{AS,S}$ is an oblique projector we compute
\begin{align*}
({P^{\epsilon}_{AS,S}})^2 &=  AS(S^T\!AS)_{\epsilon}^\dagger S^T\! AS(S^T\!AS)_{\epsilon}^\dagger S^T \\
& = AS(S^T\!AS)_{\epsilon}^\dagger (S^T AS)_\epsilon(S^T\!AS)_{\epsilon}^\dagger S^T\\
&= AS(S^T\!AS)_{\epsilon}^\dagger S^T = P^\epsilon_{AS,S}.
\end{align*}

We begin with four lemmas that are important for carrying out our analysis in \Cref{thm:sn}. First, \Cref{lem:max-volineq} proves an important inequality for a locally max-vol subsampling matrix $S\in 
\mathbb{R}^{n\times r}$ and \Cref{lem:relerr} proves an error bound $\|{(I-(AS)(AS)^\dagger)A}\|_F$ when locally max-vol subsampling matrix $S$ is used. \Cref{lem:max-vol} shows that the projector $P_{AS,S}^\epsilon$ is bounded by a small function involving $n$ and $r$ and finally, in \Cref{lem:eps_proj} we prove that $P_{AS,S}^\epsilon$ projects $AS$ to a slightly perturbed space of $AS$.

\begin{lemma} \label{lem:max-volineq}
    Let $S\in \mathbb{R}^{n\times r}$ be a locally max-vol subsampling matrix for a SPSD matrix $A \in \Re^{n\times n}$ corresponding to the indices $J$. Then
    \begin{equation*}
        \sigma_{\min}(S^T\!Q) \geq \frac{1}{\sqrt{1+r(n-r)}} \geq \frac{1}{\sqrt{nr}}
    \end{equation*} where $Q$ is the orthonormal factor in the thin QR decomposition of $AS$.
\end{lemma}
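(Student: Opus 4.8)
The plan is to reduce the first inequality to the classical fact that a locally maximum-volume submatrix furnishes an interpolation whose coefficients are bounded by $1$ in modulus; the second inequality is trivial, since $1+r(n-r)\le nr$ whenever $r\ge1$. Write $J=\{j_1,\dots,j_r\}$ and note that $S^TQ=Q(J,:)$, the $r\times r$ submatrix of the orthonormal factor $Q$ formed by the rows indexed by $J$. I will assume that $AS=A(:,J)$ has full column rank $r$, equivalently $\det A(J,J)\neq0$ (the rank-deficient case is analogous, working with the nonzero singular values). Then $\sigma_{\min}(Q(J,:))=1/\norm{Q(J,:)^{-1}}_2$, so it suffices to show $\norm{Q(J,:)^{-1}}_2\le\sqrt{1+r(n-r)}$.

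Next I would introduce $Z:=A(:,J)A(J,J)^{-1}\in\Re^{n\times r}$. Its columns lie in $\operatorname{span}(AS)=\operatorname{span}(Q)$, so $Z=QW$ with $W=Q^TZ$; restricting to the rows indexed by $J$ gives $Z(J,:)=Q(J,:)W=I_r$, hence $W=Q(J,:)^{-1}$ and therefore $\norm{Q(J,:)^{-1}}_2=\norm{Q^TZ}_2\le\norm{Z}_2$. Because $Z(J,:)=I_r$, we have $Z^TZ=I_r+Z(J^c,:)^TZ(J^c,:)$ with $J^c=[n]\setminus J$, so $\norm{Z}_2^2=1+\norm{Z(J^c,:)}_2^2\le 1+\norm{Z(J^c,:)}_F^2$. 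It thus only remains to prove that every entry of the $(n-r)\times r$ block $Z(J^c,:)$ has modulus at most $1$, which yields $\norm{Z(J^c,:)}_F^2\le r(n-r)$ and closes the chain of inequalities.

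The entrywise bound is where the hypotheses enter. Fix $i\notin J$ and $1\le k\le r$. Since $Z(i,:)$ solves $A(J,J)Z(i,:)^T=A(i,J)^T$, Cramer's rule gives $\abs{Z_{ik}}=\abs{\det A(J,\widehat J_k)}/\det A(J,J)$, where $\widehat J_k:=(J\setminus\{j_k\})\cup\{i\}$ differs from $J$ in exactly one index and has cardinality $r$ (the sign coming from reordering columns into $\widehat J_k$'s natural order is absorbed by the absolute value). Writing $A=B^TB$, which is possible because $A$ is SPSD, we get $A(J,\widehat J_k)=B(:,J)^TB(:,\widehat J_k)$, and the Cauchy--Schwarz-type determinant inequality $\abs{\det(X^TY)}\le\sqrt{\det(X^TX)}\sqrt{\det(Y^TY)}$ — proved via QR factorizations $X=Q_XR_X$, $Y=Q_YR_Y$ together with $\norm{Q_X^TQ_Y}_2\le1$ — gives $\abs{\det A(J,\widehat J_k)}\le\sqrt{\det A(J,J)}\sqrt{\det A(\widehat J_k,\widehat J_k)}$. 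Applying the locally max-vol property from \Cref{def:maxvol} we obtain $\det A(\widehat J_k,\widehat J_k)\le\det A(J,J)$, hence $\abs{Z_{ik}}\le1$. For $i\in J$ the row $Z(i,:)$ is a row of $I_r$, so the bound is trivial there.

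The main obstacle is precisely this last step: Cramer's rule produces the determinant of the \emph{non-principal, non-symmetric} submatrix $A(J,\widehat J_k)$, whereas the local max-vol condition controls only \emph{principal} submatrices; bridging the two is exactly where positive semidefiniteness is used, through the factorization $A=B^TB$ and the determinant inequality. The remaining ingredients — the reduction to $\norm{Z}_2$, the identity $Z^TZ=I_r+Z(J^c,:)^TZ(J^c,:)$, and the bound $1+r(n-r)\le nr$ — are routine.
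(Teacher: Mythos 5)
Your proof is correct and follows essentially the same route as the paper: both arguments hinge on converting the non-principal determinant $\det A(J,\widehat J_k)$ into principal ones via the Gram factorization $A=B^TB$ and the Cauchy--Schwarz determinant inequality, and then invoking the locally max-vol swap property. The only difference is that where the paper concludes by citing the Goreinov--Tyrtyshnikov--Zamarashkin bound for a locally dominant row set of an orthonormal matrix, you re-derive that bound inline (via the entrywise bound $\abs{Z_{ik}}\le 1$ for $Z=QQ(J,:)^{-1}$ and the estimate $\norm{Z}_2^2\le 1+\norm{Z(J^c,:)}_F^2$), which is precisely the standard proof of that cited lemma.
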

\begin{proof}
     We first show that $J$ is a locally max-vol index for $AS$, i.e., $|\mathrm{det}(A(\hat{J},J))| \leq |\mathrm{det}(A(J,J))|$ for all $\hat{J} \subseteq [n]$ with $|\hat{J}| = |J|$ and $|J\cap \hat{J}| = r-1$. Let $\hat{J} \subseteq [n]$ such that $|\hat{J}| = |J|$ and $|J\cap \hat{J}| = r-1$. Then by the same argument as \cite[Theorem 1]{cortinovis2020maximum},\footnote{The argument we refer to is only present in the arXiv version; see \url{https://arxiv.org/pdf/1902.02283}}
     \begin{equation*}
         |\mathrm{det}(A(\hat{J},J))|^2 \leq \mathrm{det}(A(J,J)) \cdot \mathrm{det}(A(\hat{J},\hat{J})).
     \end{equation*}
     Now since $J$ is a locally max-vol index for $A$, $\mathrm{det}(A(J,J) \geq \mathrm{det}(A(\hat{J},\hat{J})$, which implies that $|\mathrm{det}(A(\hat{J},J))| \leq \mathrm{det}(A(J,J))$. Therefore, $J$ is a locally max-vol index for $AS$ and since 
     \begin{equation*}
         \det(Q(J,:)) = \det(A(J,J))/\det(R) \geq |\det(A(\hat{J},J))|/\det(R) = \det(Q(\hat{J},:))
     \end{equation*} for any $\hat{J} \subseteq [n]$ with $|\hat{J}| = |J|$ and $|J\cap \hat{J}| = r-1$, $J$ is also a locally max-vol index for $Q$. Now by \cite[Lemma 2.1]{GoreinovTyrtyshinikovZamarashkin1997}, we obtain
     \begin{equation*}
         \sigma_{\min}(S^T\!Q) = \sigma_{\min}(Q(J,:)) \geq \frac{1}{\sqrt{1+r(n-r)}},
     \end{equation*} and the lemma follows by noting that $nr\geq 1+r(n-r)$ for $r\geq 1$.
\end{proof}


In the analysis that follows, we repeatedly use \Cref{lem:max-volineq}. Each invocation of the inequality introduces a factor of $\sqrt{nr}$, so the error bounds in \Cref{lem:max-vol}, \Cref{lem:eps_proj} and \Cref{thm:sn} carry a polynomial prefactor that depends on $\sqrt{n}$ and $\sqrt{r}$. The classical worst-case bound $\sigma_{\min}(S^TQ) \geq 1/\sqrt{1+r(n-r)}$ \cite{GoreinovTyrtyshinikovZamarashkin1997} is theoretically tight, but it is usually quite conservative for most matrices \cite{Massei2022}. In practice, one can check the conditioning of a chosen index set $J$ by computing the QR factorization of $AS = A(:,J) = QR$ and computing the singular values of $S^TQ$. This costs $\text{O}(nr^2+r^3)$ operations. Numerical experiments typically show that the smallest singular value of $S^TQ$ is much larger than $1/\sqrt{nr}$, which confirms that the $\sqrt{nr}$ factors are worst-case artifacts \cite{DongMartinsson2023}. We note that in classical stability analysis~\cite{HighamStabilityBook}, low-degree polynomial factors in $n$ are regarded as insignificant when multiplied by the unit roundoff $u$ or a comparably small quantity. We follow this convention in this paper, even though in modern data science applications $n$ can be large enough to make terms like $n^2u$ potentially large.

\begin{lemma} \label{lem:relerr}
    Let $S\in \mathbb{R}^{n\times r}$ be a locally max-vol subsampling matrix for a SPSD matrix $A$. Then
    \begin{equation*}
        \|{(I-(AS)(AS)^\dagger)A}\|_2 \leq n(1+r) \sigma_{r+1}(A).
    \end{equation*}
\end{lemma}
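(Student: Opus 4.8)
The plan is to bound $\|(I - (AS)(AS)^\dagger)A\|_2$ by relating it to the corresponding error in the maximum norm, for which a quasi-optimal bound is already available for locally max-vol indices. Recall the cited bound $\|A - A_N\|_{\max} \leq (1+r)\sigma_{r+1}(A)$, where $A_N = AS(S^T\!AS)^\dagger S^T\!A$ is the (untruncated) Nystr\"om approximation. The first step is to observe that the residual $(I-(AS)(AS)^\dagger)A$ is closely tied to $A - A_N$: since $AS$ and $S^T\!A = (AS)^T$ have the same column space as row space (by symmetry of $A$ and SPSD-ness), one has $A_N = (AS)(AS)^\dagger A$ exactly in exact arithmetic. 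Indeed $(AS)(AS)^\dagger = Q Q^T$ where $AS = QR$ is a thin QR factorization, and a short computation using $S^T\!AS = R^T (S^TQ)R$ together with invertibility of $S^TQ$ (guaranteed by \Cref{lem:max-volineq}) shows $AS(S^T\!AS)^\dagger S^T = QQ^T$. Hence $(I-(AS)(AS)^\dagger)A = A - A_N$, and it suffices to bound $\|A-A_N\|_2$.

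Next I would convert the max-norm bound into a spectral-norm bound. The crude route is $\|M\|_2 \leq n\|M\|_{\max}$ for an $n\times n$ matrix $M$, which applied to $M = A - A_N$ immediately gives $\|A-A_N\|_2 \leq n(1+r)\sigma_{r+1}(A)$, exactly the claimed inequality. This is the cleanest path and matches the $n$ prefactor in the statement. (An alternative, if one wanted to avoid citing the max-norm result, would be to work directly: write $A - A_N = (I - QQ^T)A$ where $Q$ spans the column space of $AS$; then use that $S^TQ$ is well-conditioned via \Cref{lem:max-volineq} to argue that the column space of $AS$ captures the dominant $r$-dimensional subspace of $A$ up to a factor $\sqrt{nr}$, in the spirit of interpolative-decomposition error bounds. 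But this reproduces essentially the content of \cite{Massei2022}, so invoking the max-norm bound is preferable.)

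The main obstacle is the first step: verifying carefully that $(I-(AS)(AS)^\dagger)A$ really equals $A - A_N$, i.e. that the two-sided Nystr\"om projector $AS(S^T\!AS)^\dagger S^T$ coincides with the one-sided orthogonal projector $QQ^T$ onto $\operatorname{span}(AS)$. This hinges on $S^TQ$ (equivalently $S^T\!AS$) being invertible, which is where \Cref{lem:max-volineq} enters; without it the pseudoinverse could drop rank and the identity would fail. Once that algebraic identity is in hand, the rest is the routine norm conversion $\|\cdot\|_2 \leq n\|\cdot\|_{\max}$ and substitution of the known $(1+r)\sigma_{r+1}(A)$ max-norm estimate. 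I would also double-check that the locally max-vol hypothesis indeed forces $A(J,J)$ — and therefore $S^T\!AS$ — to be nonsingular (a max-vol set of size $r$ has positive determinant whenever $A$ has rank at least $r$; the degenerate case $\operatorname{rank}(A) < r$ can be handled separately or reduces to a smaller $r$).
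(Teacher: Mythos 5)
Your overall strategy---reduce to the known max-norm bound for the Nystr\"om error and then convert norms with $\|\cdot\|\le n\|\cdot\|_{\max}$---is the right one and is essentially the paper's. But the algebraic identity you rest it on is false. You claim that $AS(S^T\!AS)^\dagger S^T=QQ^T$; with $AS=QR$ and $S^T\!Q$, $R$ invertible one actually gets
\begin{equation*}
AS(S^T\!AS)^{-1}S^T \;=\; QR\,(S^T\!QR)^{-1}S^T \;=\; Q\,(S^T\!Q)^{-1}S^T,
\end{equation*}
which is an \emph{oblique} projector: it has the same range as $QQ^T$ but null space $\kernel(S^T)$ rather than $\image(Q)^{\perp}$, so it coincides with $QQ^T$ only when $\image(S)=\image(AS)$. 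Hence $(I-(AS)(AS)^\dagger)A\neq A-A_N$ in general. Concretely, for $A=\left[\begin{smallmatrix}2&1\\1&1\end{smallmatrix}\right]$ and $S=e_1$ (a locally max-vol choice, since $A_{11}>A_{22}$) one finds $A-A_N=\left[\begin{smallmatrix}0&0\\0&0.5\end{smallmatrix}\right]$ but $(I-(AS)(AS)^\dagger)A=\left[\begin{smallmatrix}0&-0.2\\0&\phantom{-}0.4\end{smallmatrix}\right]$.

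The repair is to replace your equality by an inequality, which is what the paper does: $I-(AS)(AS)^\dagger$ is an orthogonal projector annihilating $AS$, so $(I-(AS)(AS)^\dagger)A=(I-(AS)(AS)^\dagger)(A-ASC)$ for every $C$, whence
\begin{equation*}
\|(I-(AS)(AS)^\dagger)A\|_2\le\|(I-(AS)(AS)^\dagger)A\|_F=\min_{C}\|A-ASC\|_F\le\|A-AS(S^T\!AS)^{-1}S^T\!A\|_F.
\end{equation*}
From there your remaining steps go through verbatim: $\|\cdot\|_F\le n\|\cdot\|_{\max}$ and the classical bound $\|A-A_N\|_{\max}\le(1+r)\sigma_{r+1}(A)$ give the claim. (The invertibility of $S^T\!Q$ via \Cref{lem:max-volineq}, which you correctly flag, is still needed so that the max-norm bound, stated with $(S^T\!AS)^{-1}$, applies.) So the gap is real but local: one false identity standing in for a one-line optimality argument.
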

\begin{proof}
   A locally max-vol subsampling matrix satisfies the classical bound 
\cite{Massei2022,goreinov2001maximal}
\begin{equation*}
    \|A - AS(S^T\!AS)^{-1}S^T\!A\|_{\max} \leq (r+1)\,\sigma_{r+1}(A),
\end{equation*}
where $\|\cdot\|_{\max}$ denotes the maximum magnitude among the entries 
of the matrix argument. 

    Now consider the following optimization problem
    \begin{equation} \label{eq:optprob}
        \min_{C} \|{A-AS C}\|_F.
    \end{equation}  Since the optimization problem \eqref{eq:optprob} has the solution $(AS)^\dagger A=\arg\min_{C} \|{A-AS C}\|_F$, we have
    \begin{equation*}
        \|{A-AS(AS)^\dagger A}\|_F = \min_{C} \|{A-AS C}\|_F \leq \|{A-AS(S^TAS)^{-1}S^T\!A}\|_F.
    \end{equation*}
    Therefore, putting these together we obtain
    \begin{align*}
        \|{(I-(AS)(AS)^\dagger)A}\|_2 &\leq \|{A-AS(AS)^\dagger A}\|_F \leq \|{A-AS(S^T\!AS)^{-1}S^T\!A}\|_F \\
        &\leq n \|{A-AS(S^T\!AS)^{-1}S^T\!A}\|_{\max}\leq n(r+1) \sigma_{r+1}(A),
    \end{align*} where in the penultimate inequality, we used $\|{B}\|_F \leq n \|{B}\|_{\max}$ for $B\in \mathbb{R}^{n\times n}$.
\end{proof}
A recent refinement by \cite{allen2024maximal} slightly sharpens the 
dependence on the singular values, and also establishes guarantees 
for approximate max-vol subsampling. For the purposes of this work, we retain the classical bound in order to keep the presentation simple.

\begin{lemma} \label{lem:max-vol}
Given a SPSD matrix $A\in\mathbb{R}^{n\times n}$, let $S\in \mathbb{R}^{n\times r}$ be a locally max-vol subsampling matrix for $A$ and assume that $r\leq \mathrm{rank}(A)$. 
Then
\begin{equation*}
    \|P_{AS,S}^\epsilon\|_2 = \|AS(S^T\!AS)_{\epsilon}^\dagger\|_2\leq \sqrt{nr}.
\end{equation*}
\end{lemma}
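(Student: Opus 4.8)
The plan is to extract from $P_{AS,S}^\epsilon = AS(S^T\!AS)_\epsilon^\dagger S^T$ the two factors that do not change the spectral norm, reduce to the \emph{untruncated} quantity $\|AS(S^T\!AS)^\dagger\|_2$, and then invoke \Cref{lem:max-volineq}. The stated equality $\|P_{AS,S}^\epsilon\|_2=\|AS(S^T\!AS)_\epsilon^\dagger\|_2$ is immediate: since $S$ has orthonormal columns, $\|S\|_2=\|S^T\|_2=1$ and $S^T\!S=I_r$, so $\|AS(S^T\!AS)_\epsilon^\dagger S^T\|_2\le\|AS(S^T\!AS)_\epsilon^\dagger\|_2$, while $\|AS(S^T\!AS)_\epsilon^\dagger\|_2=\|AS(S^T\!AS)_\epsilon^\dagger S^T\!S\|_2\le\|AS(S^T\!AS)_\epsilon^\dagger S^T\|_2$. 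It therefore suffices to bound $\|AS(S^T\!AS)_\epsilon^\dagger\|_2$.

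Write $W:=S^T\!AS$, which is SPSD, and fix its eigendecomposition $W=V\Lambda V^T$. Set $\Theta:=W W_\epsilon^\dagger$. Computing in the eigenbasis shows $\Theta=V D V^T$, where $D$ is the diagonal $0/1$ matrix with $D_{ii}=1$ iff $\lambda_i\ge\epsilon$; that is, $\Theta$ is the orthogonal projector onto the span of the eigenvectors of $W$ with eigenvalue at least $\epsilon$, so $\|\Theta\|_2\le1$. Moreover $\operatorname{range}(W_\epsilon^\dagger)\subseteq\operatorname{range}(W)$, hence $W^\dagger W W_\epsilon^\dagger=W_\epsilon^\dagger$, and therefore
\begin{equation*}
  AS\,W_\epsilon^\dagger \;=\; AS\,(W^\dagger W)\,W_\epsilon^\dagger \;=\; (AS\,W^\dagger)\,\Theta .
\end{equation*}
Consequently $\|AS(S^T\!AS)_\epsilon^\dagger\|_2=\|AS\,W_\epsilon^\dagger\|_2\le\|AS\,W^\dagger\|_2\,\|\Theta\|_2\le\|AS\,W^\dagger\|_2$, and the truncation has been absorbed into a projector.

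It remains to prove $\|AS\,W^\dagger\|_2\le\sqrt{nr}$, which is precisely where \Cref{lem:max-volineq} enters. We assume, as in \Cref{lem:max-volineq}, that $AS$ has full column rank $r$, so that $W=A(J,J)$ is nonsingular; the rank-deficient case follows by an analogous argument with $Q$ an orthonormal basis of $\operatorname{range}(AS)$. Let $AS=QR$ be the thin QR factorization, with $Q^TQ=I_r$ and $R$ nonsingular. Then $W=S^TQR$ with $S^TQ$ nonsingular by \Cref{lem:max-volineq}, so
\begin{equation*}
  AS\,W^{-1} \;=\; QR\,(S^TQR)^{-1} \;=\; Q\,(S^TQ)^{-1},
\end{equation*}
and hence $\|AS\,W^{-1}\|_2=\|(S^TQ)^{-1}\|_2=1/\sigma_{\min}(S^TQ)\le\sqrt{1+r(n-r)}\le\sqrt{nr}$ by \Cref{lem:max-volineq}. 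Chaining the displayed bounds gives $\|P_{AS,S}^\epsilon\|_2=\|AS(S^T\!AS)_\epsilon^\dagger\|_2\le\sqrt{nr}$.

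I do not anticipate a serious obstacle. The only step needing a little care is the clean separation of the $\epsilon$-truncation as an orthogonal projector of norm at most one, together with the identity $W^\dagger W W_\epsilon^\dagger=W_\epsilon^\dagger$; this is transparent once one works in the eigenbasis of the \emph{symmetric} matrix $W$, whereas trying to manipulate $(S^T\!AS)_\epsilon$ directly through the factorization $S^T\!AS=S^TQR$ would be considerably more awkward. (An alternative, which also covers the rank-deficient case uniformly, is to write $A=BB^T$ and $G=B^TS$, so that $AS=BG$ and $W=G^TG$, and argue via the SVD of $G$; this reaches the same bound but is no simpler.) The remaining bookkeeping is routine.
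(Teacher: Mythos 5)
Your proof is correct and follows essentially the same route as the paper's: both use the thin QR factorization $AS=QR$, absorb the truncation by recognizing $(S^T\!AS)(S^T\!AS)_\epsilon^\dagger$ as an orthogonal projector of norm at most one, and conclude via \Cref{lem:max-volineq} applied to $(S^T\!Q)^{-1}$. The only differences are cosmetic (you peel off the projector first and then compute $ASW^{-1}=Q(S^T\!Q)^{-1}$ exactly, and you spell out the equality $\|P_{AS,S}^\epsilon\|_2=\|AS(S^T\!AS)_\epsilon^\dagger\|_2$, which the paper asserts without comment).
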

\begin{proof}
Let $AS = QR$ be the thin QR decomposition of $AS$. Then
     \begin{align*}
\|AS(S^T\!A S)_\epsilon^\dagger\|_2 & = 
\|QR(S^T\! AS)_\epsilon^\dagger\|_2 =\|R(S^TAS)_\epsilon^\dagger\|_2=\|(S^T\!Q)^\dagger (S^T\!Q)R(S^T\!AS)_\epsilon^\dagger\|_2\\
&\leq \|(S^T\!Q)^\dagger\|_2 \|(S^T\!AS)(S^T\!AS)_\epsilon^\dagger\|_2=\|(S^T\!Q)^\dagger\|_2, \leq \sqrt{nr}
  \end{align*} where \Cref{lem:max-volineq} was used for the last inequality.
\end{proof}

\begin{lemma} \label{lem:eps_proj}
    With $P^{\epsilon}_{AS, S} = AS (S^T\!AS)_{\epsilon}^\dagger S^T$ as in \Cref{lem:max-vol}
\begin{equation*}
    P^{\epsilon}_{AS, S}AS = AS + E_\epsilon,
\end{equation*} where $\|{E_\epsilon}\|_2 \leq 2\epsilon \sqrt{nr}$.
\end{lemma}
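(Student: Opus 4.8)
The plan is to compute $P^{\epsilon}_{AS,S}AS$ directly and isolate the difference from $AS$ in terms of the discarded part of the spectrum of the core matrix $W:=S^T\!AS$. Write $W = W_\epsilon + \Delta$, where $W_\epsilon$ is the $\epsilon$-truncation of $W$ and $\Delta$ collects the singular components of $W$ below $\epsilon$, so that $\|\Delta\|_2 \le \epsilon$. Then
\begin{equation*}
P^{\epsilon}_{AS,S}AS = AS\,W_\epsilon^\dagger\,S^T\!AS = AS\,W_\epsilon^\dagger (W_\epsilon + \Delta) = AS\,W_\epsilon^\dagger W_\epsilon + AS\,W_\epsilon^\dagger \Delta.
\end{equation*}
The first term equals $AS\,\Pi$, where $\Pi := W_\epsilon^\dagger W_\epsilon$ is the orthogonal projector onto the row space (equivalently column space, since $W$ is symmetric) of $W_\epsilon$. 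Thus $E_\epsilon = AS(\Pi - I) + AS\,W_\epsilon^\dagger \Delta$, and the task reduces to bounding each of these two pieces by $\epsilon\sqrt{nr}$.

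For the second piece, note $\|AS\,W_\epsilon^\dagger \Delta\|_2 \le \|AS\,W_\epsilon^\dagger\|_2\,\|\Delta\|_2 = \|P^{\epsilon}_{AS,S}\|_2\,\|\Delta\|_2 \le \sqrt{nr}\cdot\epsilon$ by \Cref{lem:max-vol} and $\|\Delta\|_2\le\epsilon$. For the first piece, the key observation is that $AS(I-\Pi)$ has small norm: $(I-\Pi)$ projects onto the span of the singular vectors of $W$ with singular values below $\epsilon$, and on that subspace $W$ itself is small. Concretely, write $AS = QR$ for the thin QR factorization; then, as in the proof of \Cref{lem:max-vol}, $\|AS(I-\Pi)\|_2 = \|R(I-\Pi)\|_2 \le \|(S^TQ)^\dagger\|_2\,\|S^TAS(I-\Pi)\|_2 = \|(S^TQ)^\dagger\|_2\,\|W(I-\Pi)\|_2$, using $R(I-\Pi) = (S^TQ)^\dagger (S^TQ)R(I-\Pi) = (S^TQ)^\dagger W (I-\Pi)$. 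Since $W(I-\Pi) = W_\epsilon(I-\Pi) + \Delta(I-\Pi) = \Delta(I-\Pi)$ (as $W_\epsilon\Pi = W_\epsilon$ by construction), we get $\|W(I-\Pi)\|_2 = \|\Delta(I-\Pi)\|_2 \le \epsilon$, and \Cref{lem:max-volineq} gives $\|(S^TQ)^\dagger\|_2 \le \sqrt{nr}$. Hence $\|AS(\Pi-I)\|_2 \le \epsilon\sqrt{nr}$.

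Combining the two bounds by the triangle inequality yields $\|E_\epsilon\|_2 \le 2\epsilon\sqrt{nr}$, as claimed. The main obstacle — and the only step requiring a little care — is the treatment of the first piece $AS(\Pi - I)$: one must resist simply writing $\|AS(\Pi-I)\|_2 \le \|AS\|_2$, which is far too crude (it carries $\sigma_1(A)$), and instead route through the thin QR factor and the identity $W(I-\Pi) = \Delta(I-\Pi)$ to extract the factor $\epsilon$. Everything else is bookkeeping with the SVD of $W$ and reuse of \Cref{lem:max-volineq,lem:max-vol}.
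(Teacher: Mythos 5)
Your proposal is correct and follows essentially the same route as the paper's proof: the decomposition $E_\epsilon = AS(\Pi - I) + AS\,W_\epsilon^\dagger\Delta$ is exactly the paper's splitting into $-ASU_2U_2^T$ and $-AS(S^T\!A_\epsilon S)^\dagger S^T\!ES$ (the paper merely phrases the truncation as a perturbation $A_\epsilon = A + E$ of $A$ rather than of $W$ directly), and both pieces are bounded by the same mechanism — \Cref{lem:max-vol} for the second, and the identity $R(I-\Pi) = (S^TQ)^\dagger W(I-\Pi)$ together with \Cref{lem:max-volineq} for the first. No gaps.
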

\begin{proof}
We first notice that there exists $A_\epsilon = A+E$ with $\|{E}\|_2 \leq \epsilon$ such that
\begin{equation*}
    (S^T\!AS)_\epsilon = S^T\!A_{\epsilon}S.
\end{equation*} Indeed if  $S^T\!AS = U_1 \Sigma_1 U_1^T + U_2 \Sigma_2 U_2^T$ is the SVD, with $\|\Sigma_2\|_2\leq \epsilon$ , it is sufficient to set $A_\epsilon = A - S U_2\Sigma_2 U_2^T S^{T}$. Then we get $S^T\!A_\epsilon S = [U_1,U_2] \begin{bmatrix}
    \Sigma_1 & 0 \\ 0 & 0
\end{bmatrix}[U_1,U_2]^T$ as the SVD of $S^T\!A_\epsilon S$.
Hence we have $P^\epsilon_{AS,S}AS=AS(S^T\!A S)_\epsilon^\dagger (S^T\!AS)=AS(S^T\! A_\epsilon S)^\dagger (S^T\!AS)$. Now 
\begin{align*}
P_{AS,S}^\epsilon AS&=AS(S^T\! A_\epsilon S)^\dagger (S^T\!AS)
=AS(S^T\! A_\epsilon S)^\dagger (S^T\!(A_\epsilon-E)S)\\
&=AS(S^T\! A_\epsilon S)^\dagger (S^T\! A_\epsilon S)-AS(S^T\! A_\epsilon S)^\dagger S^T\!ES,
  \end{align*}
where the second term in the final equality satisfies 
\begin{align*}
    \|{AS(S^T\! A_\epsilon S)^\dagger S^T\!ES}\|_2 \leq  \|{AS(S^T \!A_\epsilon S)^\dagger}\|_2 \|{S^T\!ES}\|_2 \leq \epsilon  \sqrt{nr} 
\end{align*} by \Cref{lem:max-vol}. Then
\begin{align*}
    AS(S^T\! A_\epsilon S)^\dagger (S^T\! A_\epsilon S) &= ASU_1 U_1^T = AS -ASU_2U_2^T
\end{align*} and
\begin{align*}
    \|{ASU_2 U_2^T}\|_2 & = \|QR U_2U_2^T\|_2=\|R U_2U_2^T\|_2 = \|{(S^T\!Q)^\dagger S^T\!QR U_2 U_2^T}\|_2 \\
     &= \|{(S^T\!Q)^\dagger S^T\! A S U_2 U_2^T}\|_2 \leq \|{(S^T\!Q)^\dagger}\|_2 \|{S^T\!(A_\epsilon-E)S U_2 U_2^T}\|_2 \\
    &\leq \sqrt{nr} \|{-S^T\!ES U_2 U_2^T}\|_2 \leq \epsilon \sqrt{nr},  
\end{align*} where \Cref{lem:max-volineq} and $S^T\!A_{\epsilon} S \tilde{U}_2  = 0$ was used in the penultimate inequality. The result follows by setting
\begin{equation*}
    E_\epsilon:= -AS U_2 U_2^T - AS(S^T\! A_\epsilon S)^\dagger S^T\!ES
\end{equation*}
and noting that
\begin{equation*}
    P_{AS,S}^\epsilon AS = AS-AS U_2 U_2^T - AS(S^T A_\epsilon S)^\dagger S^TES  = AS+ E_\epsilon,
\end{equation*} where $\|E_{\epsilon}\|_2 \leq 2\epsilon\sqrt{nr}$.
\end{proof}

\noindent We now combine the lemmas to analyze the accuracy of the stabilized Nystr\"om method in exact arithmetic.

\begin{theorem}[Accuracy of Stabilized Nystr\"om] \label{thm:sn}
    Let $A\in \mathbb{R}^{n\times n}$ be a SPSD matrix and $S\in \mathbb{R}^{n\times r}$ be a locally max-vol subsampling matrix for $A$. 
    Then
    \begin{equation*}
        \|A-A_{N}^{(\epsilon)}\|_2 \leq n^2r(r+1) \sigma_{r+1}(A) + 2nr\epsilon.
    \end{equation*}
\end{theorem}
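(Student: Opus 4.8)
The plan is to decompose the error $A - A_N^{(\epsilon)} = (I - P_{AS,S}^\epsilon)A$ by inserting the (exact, untruncated) orthogonal projector $(AS)(AS)^\dagger$, which is the best rank-$r$ range projector associated with the chosen columns. Write
\begin{equation*}
    A - A_N^{(\epsilon)} = (I - P_{AS,S}^\epsilon)A = \big(I - (AS)(AS)^\dagger\big)A + \big((AS)(AS)^\dagger - P_{AS,S}^\epsilon\big)A.
\end{equation*}
The first term is controlled directly by \Cref{lem:relerr}, giving $n(r+1)\sigma_{r+1}(A)$. The burden of the proof is therefore the second term, which measures the discrepancy between the true range projector and the $\epsilon$-truncated oblique projector.

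For the second term, I would exploit that $P_{AS,S}^\epsilon$ is itself a projector (as verified in the excerpt) whose range is contained in $\operatorname{span}(AS)$, so that $(AS)(AS)^\dagger P_{AS,S}^\epsilon = P_{AS,S}^\epsilon$. Hence
\begin{equation*}
    \big((AS)(AS)^\dagger - P_{AS,S}^\epsilon\big)A = (AS)(AS)^\dagger\big(I - P_{AS,S}^\epsilon\big)A,
\end{equation*}
and since $(AS)(AS)^\dagger$ has unit norm it suffices to bound $\|(I - P_{AS,S}^\epsilon)(AS)(AS)^\dagger A\|_2$. Now I would use \Cref{lem:eps_proj}: because $P_{AS,S}^\epsilon AS = AS + E_\epsilon$ with $\|E_\epsilon\|_2 \le 2\epsilon\sqrt{nr}$, the oblique projector acts almost as the identity on the column space of $AS$. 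Writing $(AS)(AS)^\dagger A = AS \cdot (AS)^\dagger A$ and applying $(I - P_{AS,S}^\epsilon)$ gives $(I - P_{AS,S}^\epsilon)AS\,(AS)^\dagger A = -E_\epsilon (AS)^\dagger A$, so the second term has norm at most $2\epsilon\sqrt{nr}\,\|(AS)^\dagger A\|_2$. The remaining task is to bound $\|(AS)^\dagger A\|_2$; here I would again use \Cref{lem:max-volineq}, since $(AS)^\dagger = R^\dagger Q^T$ and $\|R^\dagger Q^T A\|_2 = \|R^\dagger S^T A\|_2$-type manipulations relate this to $\|(S^TQ)^\dagger\|_2 \le \sqrt{nr}$ after writing $S^TA$ in terms of $S^TAS$ and the QR factors — more precisely $(AS)^\dagger A = R^{-1}Q^TA$ and $S^TAS = R^TS^TQR$ gives $(S^TQ)^{-1} = RS^T(AS)^{-\dagger}$-style identities, yielding a $\sqrt{nr}$ factor (times $\|A\|$ bounded crudely, or more carefully $\sigma_1$). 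Combining, the second term contributes on the order of $2\epsilon \cdot nr$ plus possibly an extra $\sqrt{nr}$-type factor; the stated bound $n^2 r(r+1)\sigma_{r+1}(A) + 2nr\epsilon$ suggests the authors absorb one invocation of \Cref{lem:relerr} with an extra $n\sqrt{nr}$ or so, so I would expect a slightly more generous accounting than my sketch, chaining \Cref{lem:relerr}, \Cref{lem:max-vol}, and \Cref{lem:eps_proj} with their respective $\sqrt{nr}$ and $n$ prefactors.

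The main obstacle I anticipate is the careful bookkeeping of the polynomial prefactors: each of the three lemmas contributes its own $\sqrt{nr}$ or $n$ factor, and the $\sigma_{r+1}(A)$ term picks up an extra power of $n$ (or $n\sqrt{nr}$) from composing the \Cref{lem:relerr} estimate with the projector norm bound from \Cref{lem:max-vol}. Getting exactly $n^2 r(r+1)$ rather than some other polynomial requires being deliberate about where the $\|\cdot\|_F \le n\|\cdot\|_{\max}$ and $\|\cdot\|_2 \le \|\cdot\|_F$ steps are applied and about not double-counting the $\sqrt{nr}$ from $\|(S^TQ)^\dagger\|_2$. There is also a minor subtlety in that $A_N^{(\epsilon)}$ is not symmetric-looking as written — one should either symmetrize via $P_{AS,S}^\epsilon A (P_{AS,S}^\epsilon)^T$ or argue directly with the oblique projector as above — but since \Cref{lem:eps_proj} and \Cref{lem:max-vol} are already phrased for the oblique $P_{AS,S}^\epsilon$, the direct route should go through without needing symmetry.
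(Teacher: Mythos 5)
Your decomposition runs into a non-commutativity problem at the key step. You correctly observe that $(AS)(AS)^\dagger P_{AS,S}^\epsilon = P_{AS,S}^\epsilon$, so the second term of your splitting is
\[
\bigl((AS)(AS)^\dagger - P_{AS,S}^\epsilon\bigr)A \;=\; (AS)(AS)^\dagger\bigl(I-P_{AS,S}^\epsilon\bigr)A,
\]
with the orthogonal projector sitting to the \emph{left} of $I-P_{AS,S}^\epsilon$. Bounding this by $\|(AS)(AS)^\dagger\|_2\,\|(I-P_{AS,S}^\epsilon)A\|_2$ is circular (it is the quantity you set out to bound), and when you pass to ``it suffices to bound $\|(I-P_{AS,S}^\epsilon)(AS)(AS)^\dagger A\|_2$'' you have silently commuted the two projectors; they do not commute, so this is a different matrix. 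The swap is tempting precisely because only in the swapped order does \Cref{lem:eps_proj} apply: that lemma controls $(I-P_{AS,S}^\epsilon)AS$, i.e.\ the oblique projector acting on $AS$ from the left. Even granting the swap, the resulting factor $\|(AS)^\dagger A\|_2$ is not controlled by any of the quoted lemmas: since $A$ is symmetric, $(AS)^\dagger A = (S^T\!A^2S)^{-1}S^T\!A^2$ when $AS$ has full column rank, which is the interpolation-matrix norm associated with $A^2$ rather than $A$, and a locally max-vol index set for $A$ need not be locally max-vol for $A^2$, so no $\sqrt{nr}$ bound follows from \Cref{lem:max-volineq}. Your own hedging at this point (``yielding a $\sqrt{nr}$ factor \dots\ times $\|A\|$ bounded crudely'') is exactly where the argument breaks.

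The paper sidesteps both issues by inserting the identity on the \emph{right}: it writes $(I-P_{AS,S}^\epsilon)A = (I-P_{AS,S}^\epsilon)A(I-SM_S) + (I-P_{AS,S}^\epsilon)AS\,M_S$ with $M_S=(Q^T\!S)^\dagger Q^T$, so that the second term is exactly $E_\epsilon$ from \Cref{lem:eps_proj} times $M_S$ (bounded by $2\epsilon\sqrt{nr}\cdot\sqrt{nr}$ via \Cref{lem:max-volineq}), while the first term factors as $(I-P_{AS,S}^\epsilon)\,A(I-QQ^T)\,(I-S(Q^T\!S)^\dagger Q^T)$ and is bounded termwise by \Cref{lem:max-vol}, \Cref{lem:relerr} and \Cref{lem:max-volineq}, using $\|I-P\|_2=\|P\|_2$ for oblique projectors. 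If you wish to keep your left-insertion of $(AS)(AS)^\dagger$ for the first term, you would still need this right-multiplication device (or an independent bound on $\|(AS)^\dagger A\|_2$) to close the second term, so I recommend adopting the paper's decomposition.
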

\begin{proof}
We first note that, by \Cref{lem:eps_proj},
\begin{align*}
    A-A_{N}^{(\epsilon)} = (I-P_{AS,S}^\epsilon)A
    = (I-P_{AS,S}^\epsilon)A(I-SM_S)+ E_\epsilon M_S
\end{align*} for any matrix $M_S \in \mathbb{R}^{r\times n}$ and $E_\epsilon$ is as in \Cref{lem:eps_proj}, satisfying $\|{E_\epsilon}\|_2 \leq 2\epsilon \sqrt{nr}$. Now choose $M_S = (Q^T\!S)^\dagger Q^T$ where $Q$ is the orthonormal matrix in the thin QR decomposition of $AS$. We get
\begin{align*}
    (I-P_{AS,S}^\epsilon)A(I-SM_S) &= (I-P_{AS,S}^\epsilon)A(I-S(Q^T\!S)^\dagger Q^T) \\
    &= (I-P_{AS,S}^\epsilon)A(I-QQ^T)(I-S(Q^T\!S)^\dagger Q^T)
\end{align*} since $Q^T\!S$ is non-singular by \Cref{lem:max-volineq} and
\begin{equation*}
    \|{E_\epsilon M_S}\|_2 \leq \|{E_\epsilon}\|_2 \|{(Q^T\!S)^\dagger}\|_2 \leq 2\epsilon \sqrt{nr} \sqrt{nr} = 2\epsilon nr
\end{equation*} by \Cref{lem:max-volineq}. Finally, we get
\begin{align*}
    \|A-A_{N}^{(\epsilon)}\|_2 &\leq \|{(I-P_{AS,S}^\epsilon)A(I-QQ^T)(I-S(Q^T\!S)^\dagger Q^T)}\|_2 + \|{E_\epsilon M_S}\|_2 \\
    &\leq \|{I-P_{AS,S}^\epsilon}\|_2 \|{A(I-QQ^T)}\|_2\|{I-S(Q^T\!S)^\dagger Q^T}\|_2 + 2nr\epsilon \\
    &\leq \|{P_{AS,S}^\epsilon}\|_2 \|{A(I-QQ^T)}\|_2\|{S(Q^T\!S)^\dagger Q^T}\|_2 + 2nr\epsilon \\
    &\leq \sqrt{nr} \|{A(I-QQ^T)}\|_2 \sqrt{nr} + 2nr\epsilon  \\
    &\leq n^2r(r+1)\sigma_{r+1}(A) + 2nr\epsilon,
\end{align*} where from the second to the third line we used the fact that any oblique projector $P$ satisfies $\|I-P\|_2=\|P\|_2$ and in the last two inequalities we used \Cref{lem:max-volineq,lem:max-vol,lem:relerr}.
\end{proof}

\section{Numerical Stability of SN} \label{sec:stability of MLN}

So far, we have analyzed SN in exact arithmetic, showing that it closely preserves the accuracy of the Nystr\"om algorithm. Here we address the problem of the stability of the algorithm; that is, we provide an actual implementation of the algorithm and show that even taking into account round-off errors, the computed approximant $fl(AS(S^T\!AS)^\dagger_\epsilon S^T\!A)$ has error $\|A-fl(A_{SN})\|_F$ comparable to $A-A_{N}^{(\epsilon)}$. We consider the following implementation: we first compute $AS$ and $S^T\!AS$. Next, we compute a truncated Cholesky decomposition $R_\epsilon^TR_\epsilon = (S^T\!AS)_\epsilon$. Finally, we obtain the low-rank factor $ASR_{\epsilon}^{\dagger}$.

Throughout the analysis, we let $u$ denote the unit roundoff, and use the symbol $\mach$ 
to represent either a scalar or a matrix whose norm is bounded 
by at most a low-degree polynomial in $n$ and $r$, multiplied 
by $u$. In particular, $\|\mach\|_F$ does not grow 
exponentially with $n$, nor does it scale inversely with small 
quantities such as $\epsilon$ or $\sigma_r(A)$.
Similarly, we distinguish between the notation 
$\text{O}(1)$ to denote a quantity bounded in magnitude by a constant 
$c$, where $c$ is independent of the problem size and $\O(1)$ which suppresses low-degree polynomial in $n$ and $r$. 
While this may appear to be a simplification, it is a standard 
convention in stability analyses (see, e.g., 
\cite{nakatsukasa2012backward}). The precise value of $\mach$ may vary from one occurrence 
to another, but we always assume that $\epsilon \gg \mach$. 
Unless explicitly stated otherwise, we normalize so that $\|A\|_2=1$.

\begin{proposition}\label{prop:computed_version}
Let $S\in\mathbb{R}^{n\times r}\ (n>r)$ be any orthonormal matrix.
There exist matrices $\Ah = A +\Delta \Ah$ and $\At = A + \Delta \At$, with $\|\Delta \Ah\|_2,\|\Delta \At\|_2\leq \mach$ such that
\begin{itemize}
    \item $fl(AS) = \Ah S$;
    \item $fl(S^T\!AS) = S^T\! \At S$.
\end{itemize}
\end{proposition}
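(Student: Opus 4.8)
The plan is to treat the two claims separately, since they concern different computed quantities, and in each case to invoke the standard backward error model for the relevant BLAS kernel. Recall the convention in the paper that $\|A\|_2 = 1$, so absolute and relative perturbations of $A$ coincide up to the hidden polynomial factors absorbed into $\mach$.

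For the first bullet, I would observe that $S\in\mathbb{R}^{n\times r}$ here is a general orthonormal matrix (not the subsampling matrix of the later sections), so $fl(AS)$ is computed as an honest matrix-matrix product. The classical rounding-error bound for matrix multiplication (see, e.g., \cite[Ch.~3]{HighamStabilityBook}) gives $fl(AS) = AS + F$ with $\|F\|_2 \le \gamma \,\|A\|_2\|S\|_2$ for some low-degree polynomial $\gamma = \O(1)u$; since $\|S\|_2 = 1$ and $\|A\|_2 = 1$, this is $\|F\|_2 \le \mach$. To cast this as a backward perturbation of $A$ of the form $\widehat A S$, I would set $\Delta\widehat A := F S^T$ (using $S^TS = I_r$), so that $\widehat A S = AS + F S^T S = AS + F = fl(AS)$, and $\|\Delta\widehat A\|_2 = \|FS^T\|_2 \le \|F\|_2 \le \mach$. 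This is the key trick: right-multiplying the columnwise forward error by $S^T$ converts it into a left factor, which is legitimate precisely because $S$ has orthonormal columns.

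For the second bullet, the situation is slightly more delicate because $fl(S^TAS)$ is the result of \emph{two} successive products, $C := fl(AS)$ and then $fl(S^TC)$, and moreover we want to express it as $S^T\widetilde A S$ — symmetrically sandwiched — rather than as an arbitrary perturbation. I would proceed in two stages. First, by the matrix-multiplication bound, $fl(S^TC) = S^TC + G$ with $\|G\|_2 \le \mach$, and combined with $C = AS + F$ from above this gives $fl(S^TAS) = S^TAS + S^TF + G =: S^TAS + H$ with $\|H\|_2 \le \mach$. Now I want $H = S^T \Delta\widetilde A\, S$ for some small $\Delta\widetilde A$; taking $\Delta\widetilde A := S H S^T$ works since $S^T(SHS^T)S = H$ and $\|\Delta\widetilde A\|_2 = \|SHS^T\|_2 \le \|H\|_2 \le \mach$. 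Strictly, one can even symmetrize $\Delta\widetilde A$ (replace it by its symmetric part) at no cost to the bound, so that $\widetilde A$ inherits symmetry from $A$, though the statement as written does not require it.

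The main obstacle — really the only subtlety — is the bookkeeping that turns the \emph{forward} errors produced by the standard multiplication model into the \emph{structured backward} form demanded by the statement ($\widehat A S$ and $S^T\widetilde A S$ with small $\Delta\widehat A,\Delta\widetilde A$). This hinges entirely on $S$ having orthonormal columns, which supplies the identities $S^TS = I_r$ and the fact that $\|XS^T\|_2 = \|X\|_2$ and $\|SXS^T\|_2 = \|X\|_2$, allowing the forward errors to be "pushed into $A$" without amplification. Everything else is a direct application of the componentwise/normwise rounding bounds for BLAS-2/BLAS-3 operations, with the polynomial prefactors in $n$ and $r$ swept into the $\mach$ notation per the paper's stated convention.
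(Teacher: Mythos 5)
Your proposal is correct and takes essentially the same route as the paper's proof: invoke the standard rounding-error bound for matrix multiplication and then push the forward error into $A$ using $S^TS=I_r$, i.e.\ $\Delta\widehat A = FS^T$ for the first bullet and $\Delta\widetilde A = SHS^T$ for the second. Your handling of the second bullet is slightly more explicit (tracking the two successive products separately), but the key idea and the bounds are identical to the paper's.
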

These statements are perhaps trivially true when $S$ is a subsampling matrix, often with $\Ah=A$ and $\At=A$. We state a more general version in case there are errors incurred in computing with $S$.

\begin{proof}
    For any matrix $A$ and $B$, $\|fl(AB)-AB\|_2 \leq \|A\|_2\|B\|_2\mach$ \cite[Section 3.5]{HighamStabilityBook} and since $\|S\|_2 =1$ and $\|A\|_2=1$
    
    \[fl(AS) = AS + \mach = (A+\mach S^T) S =: \Ah S\]
    and
    
    \[fl(S^T\!AS) = S^T\!AS + \mach = S^T\!(A+S\mach S^T) S =: S^T\!\At S.\]
\end{proof}
We will also assume that the computed truncated Cholesky factor $\widehat{R}_\epsilon = fl(\text{chol}((S^T\!\At S)_\epsilon))$ satisfies
\begin{equation}\label{eq:cpcholesky}
    \widehat{R}_\epsilon ^T \widehat{R}_\epsilon = (S^T\!\At S)_\epsilon + \mach,
\end{equation}
where $\widehat{R}_\epsilon \in\mathbb{R}^{\hat{r}\times r}$, with $\hat{r}\leq r$, is numerically full-rank. In fact, by applying the Cholesky algorithm with complete pivoting, one can establish the pessimistic bound \cite[Theorem 10.14]{HighamStabilityBook}
$$
\|(S^T\! \At S)_\epsilon -\widehat{R}_\epsilon^T \widehat{R}_\epsilon\|_2\leq 4^{\hat{r}}\mach,
$$
even though the dimensions of the matrix are relatively small, there is still an exponential dependence in $\hat{r}$, which we could not ignore in general. However, as explained in \cite[Theorem 10.14]{HighamStabilityBook}, this bound tends to be overly pessimistic in practice, and for our purposes, we can safely disregard it.

We now prove the floating-point version of the \Cref{lem:max-vol,lem:eps_proj}.

\begin{lemma} \label{lem:max-vol_fl}
    Let $\Ah$ and $\At$ be as in \Cref{prop:computed_version}, and suppose $S\in\mathbb{R}^{n\times r}$ is a locally max-vol subsampling matrix for $A$.
    then
    \begin{equation}
       \|\Ah S (S^T\! \At S)^\dagger_\epsilon\|_2\leq 1 + 2\sqrt{nr}.
    \end{equation}
\end{lemma}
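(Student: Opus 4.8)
The plan is to reduce the floating-point statement to the exact-arithmetic bound of \Cref{lem:max-vol} by absorbing the roundoff perturbations into the matrices $\Ah$, $\At$ and the factor $\mach$ from \eqref{eq:cpcholesky}. The key observation is that $\Ah S(S^T\!\At S)^\dagger_\epsilon$ differs from a quantity of the form $\Bh S(S^T\!\Bh S)^\dagger_\epsilon$ (with $\Bh$ a small perturbation of $A$) only by lower-order terms, so that \Cref{lem:max-vol} applied to a nearby SPSD matrix — together with a perturbation argument for how $S$ remains locally max-vol, or more directly for how $\sigma_{\min}(S^T Q)$ changes — controls the spectral norm.

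Concretely, I would proceed as follows. First, write $(S^T\!\At S)_\epsilon = \widehat{R}_\epsilon^T\widehat{R}_\epsilon + \mach$ using \eqref{eq:cpcholesky}, and let $\Ah S = \widehat{Q}\widehat{R}$ be a thin QR decomposition so that $\Ah S(S^T\!\At S)^\dagger_\epsilon = \widehat{Q}\widehat{R}(S^T\!\At S)^\dagger_\epsilon$, reducing the norm to $\|\widehat{R}(S^T\!\At S)^\dagger_\epsilon\|_2$. Then insert $(S^T\widehat{Q})^\dagger(S^T\widehat{Q})$ as in the proof of \Cref{lem:max-vol} to get the bound $\|(S^T\widehat{Q})^\dagger\|_2\,\|(S^T\!\At S)(S^T\!\At S)^\dagger_\epsilon\|_2 \le \|(S^T\widehat{Q})^\dagger\|_2$; the last factor is at most $1$ because multiplying a matrix by its own $\epsilon$-truncated pseudoinverse gives a spectral-norm-$1$ projection onto the retained singular directions. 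The term $\|(S^T\widehat{Q})^\dagger\|_2 = 1/\sigma_{\min}(S^T\widehat{Q})$ must then be shown to be at most $\sqrt{nr} + \O(1)$: since $\Ah S = A S + \mach$, the column space of $\Ah S$ is an $\O(\mach)$ perturbation of that of $AS$, hence $\widehat{Q}$ is within $\O(\mach)$ of the exact factor $Q$ (up to orthogonal rotation), and $\sigma_{\min}(S^T\widehat{Q}) \ge \sigma_{\min}(S^TQ) - \O(\mach) \ge 1/\sqrt{nr} - \O(\mach)$ by \Cref{lem:max-volineq} and Weyl's inequality. Inverting, $\|(S^T\widehat{Q})^\dagger\|_2 \le \sqrt{nr}/(1 - \O(\sqrt{nr}\,\mach)) \le \sqrt{nr}(1 + \O(\sqrt{nr}\,\mach)) \le \sqrt{nr} + \mach$, which together with the leading $1$ that may appear from collecting the perturbation cross-terms gives the stated $1 + 2\sqrt{nr}$ (the constant $2$ and the additive $1$ providing slack for the various $\O(\mach)$ contributions, which are negligible since $\epsilon \gg \mach$ and $\mach$ does not scale inversely with $\sigma_r(A)$).

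I expect the main obstacle to be the perturbation bound on $\sigma_{\min}(S^T\widehat{Q})$ — that is, making rigorous the claim that the thin-QR orthonormal factor of $\Ah S$ is close to that of $AS$ despite $AS$ possibly being ill-conditioned. The subtlety is that QR factors are only stable under perturbations relative to the conditioning of the matrix, so one cannot naively say $\widehat{Q} = Q + \O(\mach)$; instead I would argue at the level of subspaces (the column space of $\Ah S$ versus $AS$, controlled by $\sin\Theta$ theorems with the gap governed by $\sigma_{\min}(AS)$) or, more robustly, observe that $\sigma_{\min}(S^T\widehat Q)$ equals the smallest singular value of the compression of the orthogonal projector onto $\mathrm{range}(\Ah S)$ to the coordinates in $J$, and that this projector is $\O(\mach)$-close to the projector onto $\mathrm{range}(AS)$ regardless of conditioning because $\Ah S$ and $AS$ have the same size and differ by $\mach$ in norm while $\sigma_{\min}(AS)$, though possibly tiny, is bounded below by $\epsilon$-scale quantities in the regime of interest — or, failing a clean gap argument, one can bypass QR entirely by bounding $\|\Ah S(S^T\!\At S)^\dagger_\epsilon\|_2$ through $\|\Ah S\|_2 \,\|(S^T\!\At S)^\dagger_\epsilon\|_2$ only on the truncated part and handling the near-null directions separately. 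I would present the cleanest of these; the remaining steps (QR insertion, the projection bound $\le 1$, collecting $\mach$ terms) are routine once that perturbation estimate is in hand.
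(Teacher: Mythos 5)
Your overall strategy---reduce to \Cref{lem:max-vol} by a QR-plus-identity-insertion argument---is the right family of ideas, but the specific route you chose has a gap that you correctly flag and then do not close. Taking the thin QR factorization of the \emph{computed} product $\Ah S=\widehat Q\widehat R$ and trying to bound $\|(S^T\widehat Q)^\dagger\|_2$ cannot work in general: the column space of $\Ah S$ is \emph{not} an $\text{O}(\mach)$ perturbation of that of $AS$ when $\sigma_{\min}(AS)\lesssim \mach$, and nothing in the hypotheses rules this out ($\sigma_{\min}(AS)$ can be arbitrarily small, far below $\epsilon$, even for locally max-vol indices). Concretely, take $A=\diag(1,\delta,0)$ with $\delta\ll u$ and $J=\{1,2\}$ (which is locally max-vol and satisfies $r=\mathrm{rank}(A)$): then $S^TQ=I$, but a perturbation of size $u$ in the third entry of the second column of $AS$ rotates that column's direction essentially onto $e_3$, so that $\sigma_{\min}(S^T\widehat Q)\approx\delta/u$ is arbitrarily small and $(S^T\widehat Q)^\dagger(S^T\widehat Q)\neq I$, invalidating the insertion step itself. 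Your fallbacks do not rescue this: the $\sin\Theta$ gap is governed by exactly the quantity $\sigma_{\min}(AS)$ that can vanish; the claim that it is ``bounded below by $\epsilon$-scale quantities in the regime of interest'' is false; and bounding by $\|\Ah S\|_2\|(S^T\!\At S)_\epsilon^\dagger\|_2\le 1/\epsilon$ is far too weak, with no concrete plan for ``handling the near-null directions separately.''

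The paper avoids the issue by never factoring the perturbed matrix. Write $\Ah S=AS+\mach S$ and bound the perturbation term crudely: $\|\mach S(S^T\!\At S)_\epsilon^\dagger\|_2\le\mach/\epsilon\le 1$, which is where the additive $1$ in the stated bound actually comes from (not from ``slack for cross-terms''). For the main term $\|AS(S^T\!\At S)_\epsilon^\dagger\|_2$, use the \emph{exact} QR factorization $AS=QR$ and insert $(S^TQ)^{-1}(S^TQ)$ exactly as in \Cref{lem:max-vol}; this is legitimate because $S^TQ$ is genuinely invertible by \Cref{lem:max-volineq}. Then $S^TQR=S^T\!AS=S^T\!\At S+\mach$; the first piece contributes $\|(S^T\!\At S)(S^T\!\At S)_\epsilon^\dagger\|_2\le 1$ (your projection observation, which is correct), and the $\mach$ piece contributes another $\mach/\epsilon\le 1$, both multiplied by $\|(S^TQ)^{-1}\|_2\le\sqrt{nr}$, giving $1+2\sqrt{nr}$. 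If you reorganize your argument this way---perturbation split first, exact QR second---the remaining steps you outline go through.
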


\begin{proof}
    Let $AS = Q R$ be the thin QR decomposition of $A S$. Then
     \begin{align*}
\|\Ah S(S^T\!\At S)_\epsilon^\dagger\|_2 & = 
\|(A +\mach) S(S^T\!\At_\epsilon S)^\dagger\|_2 \leq \|A  S(S^T\!\At_\epsilon S)^\dagger\|_2 +\|\mach S(S^T\!\At_\epsilon S)^\dagger\|_2\\
& \leq \|Q R(S^T \!\At_\epsilon S)^\dagger\|_2 + 1  =\|R(S^T\! \At_\epsilon S)^\dagger\|_2+ 1\\
&=\|(S^T\!Q)^{-1}(S^T\!Q )R(S^T\!\At_\epsilon S)^\dagger\|_2+ 1\leq \|(S^T\!Q)^{-1}\|_2 \|(S^T\!Q)R(S^T\!\At_\epsilon S)^\dagger\|_2+ 1\\
&= \|(S^T\!Q )^{-1}\|_2 \|(S^T\!(\At_\epsilon +\mach) S)(S^T\! \At_\epsilon S)^\dagger\|_2+ 1\\
&\leq \|(S^T\!Q )^{-1}\|_2 \left(\|(S^T\!\At_\epsilon S)(S^T\! \At_\epsilon S)^\dagger\|_2+\|(S^T\!\mach S)(S^T \!\At_\epsilon S)^\dagger\|_2\right)+ 1 \\
&\leq 1 + 2\|(S^T\!Q)^{-1}\|_2\leq 1 + 2\sqrt{nr}. 
  \end{align*}
\end{proof}

\begin{lemma} \label{lem:projeqn}
Let $\Ah$ and $\At$ be as in \Cref{prop:computed_version}, then
\begin{equation}
    \Ah S (S^T\!\At S)^\dagger_\epsilon (\Ah S)^T\! S = AS + E_\epsilon,
\end{equation}
where $\|E_{\epsilon}\|_2 \leq 4\epsilon\sqrt{nr}$.
\end{lemma}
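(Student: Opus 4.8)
The plan is to mimic the exact-arithmetic proof of \Cref{lem:eps_proj}, but track the two perturbations $\Delta\Ah$ and $\Delta\At$ from \Cref{prop:computed_version}. First I would introduce the perturbed matrix $\At_\epsilon := \At + E$ with $\|E\|_2\leq \epsilon$ such that $(S^T\!\At S)_\epsilon = S^T\!\At_\epsilon S$, exactly as in \Cref{lem:eps_proj}: if $S^T\!\At S = U_1\Sigma_1 U_1^T + U_2\Sigma_2 U_2^T$ is the SVD with $\|\Sigma_2\|_2\leq\epsilon$, set $\At_\epsilon = \At - SU_2\Sigma_2 U_2^T S^T$, so that $S^T\!\At_\epsilon S$ has SVD $[U_1,U_2]\diag(\Sigma_1,0)[U_1,U_2]^T$. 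Then $\Ah S(S^T\!\At S)_\epsilon^\dagger (\Ah S)^T\!S = \Ah S(S^T\!\At_\epsilon S)^\dagger (\Ah S)^T\!S$, and I write $(\Ah S)^T\!S = S^T\!\At_\epsilon S - S^T\!ES + (\Delta\Ah)^{?}$ — more precisely $(\Ah S)^T\!S = S^T\!\Ah S$, and since $\Ah = A + \Delta\Ah$ while $\At_\epsilon = \At + E = A + \Delta\At + E$, we have $S^T\!\Ah S = S^T\!\At_\epsilon S - S^T(\Delta\At + E - \Delta\Ah)S =: S^T\!\At_\epsilon S - S^T\!FS$ with $\|F\|_2\leq 2\epsilon$ (absorbing $\|\Delta\Ah\|_2,\|\Delta\At\|_2\leq\mach\ll\epsilon$ into the $\epsilon$ bound, or more carefully $\leq\epsilon + 2\mach$).

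Next I would split $\Ah S(S^T\!\At_\epsilon S)^\dagger(S^T\!\Ah S) = \Ah S(S^T\!\At_\epsilon S)^\dagger(S^T\!\At_\epsilon S) - \Ah S(S^T\!\At_\epsilon S)^\dagger S^T\!FS$. For the second term, $\|\Ah S(S^T\!\At_\epsilon S)^\dagger S^T\!FS\|_2 \leq \|\Ah S(S^T\!\At_\epsilon S)^\dagger\|_2\,\|S^T\!FS\|_2 \leq (1+2\sqrt{nr})\cdot 2\epsilon$ by \Cref{lem:max-vol_fl}. For the first term, $\Ah S(S^T\!\At_\epsilon S)^\dagger(S^T\!\At_\epsilon S) = \Ah S U_1 U_1^T = \Ah S - \Ah S U_2 U_2^T$, and then $\Ah S = AS + \Delta\Ah S$ with $\|\Delta\Ah S\|_2\leq\mach$. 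The remaining task is bounding $\|\Ah S U_2 U_2^T\|_2$: following the exact-arithmetic argument, $\|\Ah S U_2 U_2^T\|_2 \leq \|AS U_2 U_2^T\|_2 + \mach$, and $\|AS U_2 U_2^T\|_2 = \|QRU_2U_2^T\|_2 = \|(S^T\!Q)^{-1}(S^T\!Q)RU_2U_2^T\|_2 = \|(S^T\!Q)^{-1}S^T\!ASU_2U_2^T\|_2 \leq \sqrt{nr}\,\|S^T\!\At_\epsilon S U_2 U_2^T - S^T(\At_\epsilon - A)SU_2U_2^T\|_2$; using $S^T\!\At_\epsilon S\, U_2 = 0$ and $\|\At_\epsilon - A\|_2 = \|\Delta\At + E\|_2 \leq \epsilon + \mach$, this gives $\leq \sqrt{nr}(\epsilon + \mach)\leq \sqrt{nr}\,\epsilon$ up to the $\mach$ slack.

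Collecting everything, $E_\epsilon := -\Ah S U_2 U_2^T - \Ah S(S^T\!\At_\epsilon S)^\dagger S^T\!FS + \Delta\Ah S$ satisfies $\|E_\epsilon\|_2 \leq \sqrt{nr}\,\epsilon + 2\epsilon(1+2\sqrt{nr}) + \mach$. Since $\mach\ll\epsilon$ and $1\leq\sqrt{nr}$, this is bounded by $(1 + 2 + 4)\sqrt{nr}\,\epsilon + \mach \leq$ roughly $7\sqrt{nr}\,\epsilon$; to land exactly at the stated $4\epsilon\sqrt{nr}$ I would need to be slightly more economical — e.g., note $\|F\|_2\leq\epsilon$ if one absorbs the $\mach$ terms differently, or observe that several of the crude splits can be tightened (the $\|\Ah S(S^T\!\At_\epsilon S)^\dagger\|_2\leq 1+2\sqrt{nr}$ bound is itself loose, and the $AS$-vs-$\Ah S$ replacements contribute only $\mach$, not $\epsilon$). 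The main obstacle I anticipate is exactly this bookkeeping of constants: making sure the accumulated prefactor comes out to $4\sqrt{nr}$ rather than some larger multiple, which will require choosing at each step whether a $\mach$ is absorbed into an adjacent $\epsilon$ or kept separate, and possibly re-deriving a sharper version of \Cref{lem:max-vol_fl}'s bound in the places where it is applied here. Everything else is a routine transcription of the \Cref{lem:eps_proj} argument with $A\mapsto\At_\epsilon$ and careful use of $\|\Delta\Ah\|_2,\|\Delta\At\|_2\leq\mach$.
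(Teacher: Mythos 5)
Your proposal is correct and follows essentially the same route as the paper: the same construction of $\At_\epsilon$, the same split into the $(S^T\!\At_\epsilon S)^\dagger(S^T\!\At_\epsilon S)$ and perturbation terms, and the same invocations of \Cref{lem:max-vol_fl} and \Cref{lem:max-volineq}. Your bookkeeping worry resolves as you suggest: keeping $\|F\|_2\le\epsilon+2\mach$ (rather than rounding up to $2\epsilon$) gives a total of $(1+2\sqrt{nr})\epsilon+\sqrt{nr}\,\epsilon+\mach\le 4\sqrt{nr}\,\epsilon$ since $1\le\sqrt{nr}$ and $\mach\ll\epsilon$, and in fact your handling of the $S^T\!\Ah S$ versus $S^T\!\At_\epsilon S$ discrepancy is more careful than the paper's, which writes that difference as $\mach$ and only reintroduces the $\epsilon$-sized term $E$ later.
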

\begin{proof}
    As in \Cref{lem:eps_proj}, there exists $\At_\epsilon = \At+E$ with $\|{E}\|_2 \leq \epsilon$ such that
\begin{equation*}
    (S^T\!\At S)_\epsilon = S^T\!\At_{\epsilon}S.
\end{equation*} 

Hence we have
\[
\Ah S (S^T\! \At S)^\dagger_\epsilon (\Ah S)^T S=\Ah S(S^T\!\At_\epsilon S)^\dagger (S^T\!\Ah S)=\Ah S(S^T\!\At_\epsilon S)^\dagger (S^T\!(\At_\epsilon-\mach) S).
\] The term $\Ah S(S^T\!\At_\epsilon S)^\dagger S^T\mach  S$, by \Cref{lem:max-vol_fl}, satisfies
\[
\|\Ah S(S^T\!\At_\epsilon S)^\dagger S^T\!\mach  S\|_2\leq  \|\Ah S(S^T\!\At_\epsilon S)^\dagger\|_2 \|S^T\! \mach S\|_2 \leq \mach(1+2\sqrt{nr}).
\]
Hence we focus on the remaining term.
Let $S^T\!\At_\epsilon S = [\tilde{U}_1,\tilde{U}_2] \begin{bmatrix}
    \tilde{\Sigma}_1 & 0 \\ 0 & 0
\end{bmatrix}[\tilde{U}_1,\tilde{U}_2]^T$ be the SVD of $S^T\At_\epsilon S$ where $[\tilde{U}_1,\tilde{U}_2] \in \mathbb{R}^{r\times r}$ is an orthogonal matrix and $\tilde{\Sigma}_1$ contains singular values of $S^T\At_\epsilon S$ greater than $\epsilon$. Then
\begin{align*}
    \Ah S(S^T\! \At_\epsilon S)^\dagger (S^T\! \At_\epsilon S) &= \Ah S\tilde{U}_1 \tilde{U}_1^T = A S -A S\tilde{U}_2\tilde{U}_2^T + \mach,
\end{align*} where
\begin{align*}
    \|{A S\tilde{U}_2\tilde{U}_2^T}\|_2 &\leq \|{(S^T\!Q)^\dagger S^T\! A S \tilde{U}_2\tilde{U}_2^T}\|_2 \leq \|{(S^T\!Q)^\dagger}\|_2 \|{S^T\!(\At_\epsilon- E + \mach)S\tilde{U}_2\tilde{U}_2^T}\|_2 \\
    &\leq \sqrt{nr} \|{S^T(E-\mach) S\tilde{U}_2\tilde{U}_2^T}\|_2 \leq 2\epsilon \sqrt{nr}  
\end{align*} where \Cref{lem:max-volineq} and $S^T\!\At_{\epsilon} S \tilde{U}_2  = 0$ was used in the penultimate inequality. The result follows by putting everything together.
\end{proof}

We now analyze the quantity  $fl(AS(S^T\!AS)^\dagger_\epsilon S^T\!A)$.
 Let us denote the $i$th row of a matrix $Z$
by $[Z]_i$ and the element in position $(i, j)$ by $[Z]_{ij}$.

\begin{lemma}\label{lem:floating_point_ij}
    Let $fl([AS(S^T\!AS)^\dagger_\epsilon S^T\!A]_{ij})$ be obtained by first computing an $\epsilon$-truncated Cholesky on $fl(S^T\!AS)$, giving $\widehat{R}_\epsilon$ satisfying \eqref{eq:cpcholesky},
    and then solving linear systems
    of the form $fl([AS]_k\widehat{R}_{\epsilon}^\dagger)$, for $k = i,j$, using a backward stable underdetermined linear solver.
    Then
    \begin{equation}
        fl\left([AS(S^T\!AS)^\dagger_\epsilon S^T\!A]_{ij}\right)=[AS+\mach]_i(S^T\!AS + \mach)^\dagger_{\widetilde{\epsilon}}[AS+\mach]_j^T,
    \end{equation}
    where $\widetilde{\epsilon} \geq \epsilon-\mach$.
\end{lemma}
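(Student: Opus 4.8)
The plan is to track the computation of the $(i,j)$ entry of $AS(S^T\!AS)^\dagger_\epsilon S^T\!A$ through each of the three stages of the implementation -- forming $AS$ and $S^T\!AS$, computing the truncated Cholesky factor $\widehat{R}_\epsilon$, and solving the two underdetermined systems -- and at each stage absorb the committed roundoff into an additive perturbation of size $\mach$ of the relevant matrix, exactly as in \Cref{prop:computed_version}. The target form $[AS+\mach]_i (S^T\!AS+\mach)^\dagger_{\widetilde\epsilon}[AS+\mach]_j^T$ is then obtained by showing that the computed quantity equals $\widehat{C}_i \widehat{R}_\epsilon^\dagger (\widehat{R}_\epsilon^\dagger)^T \widehat{C}_j^T$ plus $\mach$, where $\widehat{C} = fl(AS)$, and that $\widehat{R}_\epsilon^\dagger (\widehat{R}_\epsilon^\dagger)^T$ is the $\widetilde\epsilon$-pseudoinverse of a matrix of the form $S^T\!AS + \mach$.

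First I would invoke \Cref{prop:computed_version} to write $\widehat{C} = fl(AS) = \Ah S$ and $fl(S^T\!AS) = S^T\!\At S$ with $\|\Delta\Ah\|_2,\|\Delta\At\|_2 \le \mach$. Next, by the assumption~\eqref{eq:cpcholesky}, $\widehat{R}_\epsilon^T\widehat{R}_\epsilon = (S^T\!\At S)_\epsilon + \mach$. The key spectral point is that truncation and a $\mach$-perturbation nearly commute: by Weyl's inequality the singular values of $(S^T\!\At S)_\epsilon + \mach$ differ from those of $(S^T\!\At S)_\epsilon$ by at most $\mach$, so $\widehat{R}_\epsilon^T\widehat{R}_\epsilon$ equals $(S^T\!AS + \mach)_{\widetilde\epsilon}$ for some threshold $\widetilde\epsilon$ with $\widetilde\epsilon \ge \epsilon - \mach$; equivalently $\widehat{R}_\epsilon^\dagger(\widehat{R}_\epsilon^\dagger)^T = (\widehat{R}_\epsilon^T\widehat{R}_\epsilon)^\dagger = (S^T\!AS+\mach)^\dagger_{\widetilde\epsilon}$, using that $\widehat{R}_\epsilon$ is numerically full row rank so $\widehat{R}_\epsilon^\dagger(\widehat{R}_\epsilon^\dagger)^T = (\widehat{R}_\epsilon^T\widehat{R}_\epsilon)^{-1}$ on its range. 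Then, for the third stage, a backward stable underdetermined solver applied to $[AS]_k \widehat{R}_\epsilon^\dagger$ returns $fl([\widehat{C}]_k\widehat{R}_\epsilon^\dagger) = ([\widehat{C}]_k + \mach)\widehat{R}_\epsilon^\dagger = [AS + \mach]_k \widehat{R}_\epsilon^\dagger$ for $k=i,j$, where the backward error on the right-hand side row is of size $\mach\|\widehat{R}_\epsilon^\dagger\|_2$, and $\|\widehat{R}_\epsilon^\dagger\|_2 = \O(1/\sqrt{\widetilde\epsilon})$ is harmless since we assume $\epsilon \gg \mach$. Finally, forming the inner product of the two computed rows incurs one more $\mach$ (again using $\|A\|_2 = 1$ and the bounds on the computed factors from \Cref{lem:max-vol_fl}), and collecting all the perturbations into the three $\mach$ slots yields the stated identity.

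The main obstacle I anticipate is the bookkeeping in the second stage: making precise that a $\mach$-additive perturbation of the already-truncated matrix $(S^T\!\At S)_\epsilon$ can be re-interpreted as a clean $\widetilde\epsilon$-truncation of an un-truncated, $\mach$-perturbed matrix $S^T\!AS + \mach$, with the threshold shift controlled as $\widetilde\epsilon \ge \epsilon - \mach$. This requires a small gap/clustering argument on the singular values of $S^T\!AS$ near $\epsilon$ -- one must argue that the numerically retained directions of $\widehat{R}_\epsilon^T\widehat{R}_\epsilon$ are precisely those whose singular values in $S^T\!AS$ exceed roughly $\epsilon$, which is where the "numerically full-rank" hypothesis on $\widehat{R}_\epsilon$ and the standing assumption $\epsilon \gg \mach$ are essential. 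The remaining steps are routine roundoff accounting of the type catalogued in \cite[Chapters 3, 8, 10]{HighamStabilityBook}, combined with the operator-norm bounds already established in \Cref{lem:max-vol_fl,lem:projeqn}.
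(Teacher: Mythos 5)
Your overall architecture matches the paper's: absorb the formation of $AS$ and $S^T\!AS$ into $\mach$-perturbations via \Cref{prop:computed_version}, use Weyl's inequality to re-interpret the perturbed Gram matrix of the truncated Cholesky factor as a $\widetilde\epsilon$-truncated pseudoinverse of $S^T\!AS+\mach$ with $\widetilde\epsilon\geq\epsilon-\mach$, and then account for the final inner product. Two steps, however, are handled incorrectly or left unproved. First, the least-squares solve: a backward stable solver for $\min_x\|x\widehat R_\epsilon-([AS]_k+\mach)\|_2$ returns the exact solution of a nearby problem in which \emph{both} the coefficient matrix and the right-hand side are perturbed, i.e.\ $([AS]_k+\mach)(\widehat R_\epsilon+\mach)^\dagger$ (this is \cite[Theorem 20.3]{HighamStabilityBook}, which the paper invokes). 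You instead keep $\widehat R_\epsilon^\dagger$ exact and push all the error onto the right-hand side, sizing it as $\mach\|\widehat R_\epsilon^\dagger\|_2$. That quantity is of order $\mach/\sqrt{\epsilon}$, which is \emph{not} $\mach$ in the paper's convention ($\mach$ is explicitly forbidden from scaling inversely with $\epsilon$); with the recommended $\epsilon=10u$ it is of order $\sqrt{u}$ and cannot be declared harmless. The correct route is to keep the perturbation on the factor and run your Weyl argument on $(\widehat R_\epsilon+\mach)^T(\widehat R_\epsilon+\mach)=\widehat R_\epsilon^T\widehat R_\epsilon+\mach$, which is exactly how the paper converts the matrix perturbation into the $(S^T\!AS+\mach)^\dagger_{\widetilde\epsilon}$ form; your Weyl step is sound but is applied to the wrong object because the matrix perturbation was dropped.

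Second, the final multiplication. The rounding error of the inner product of the two computed rows is proportional to $\|([AS]_i+\mach)(\widehat R_\epsilon+\mach)^\dagger\|_2\,\|([AS]_j+\mach)(\widehat R_\epsilon+\mach)^\dagger\|_2$, and the naive bound on each factor is of order $1/\sqrt{\epsilon}$, which would destroy the result. The paper devotes the core of its proof to showing that each factor is $\text{O}(1)$ (its inequality \eqref{eq:tamed}), using the locally max-vol bound $\|(S^T\!Q)^{-1}\|_2\leq\sqrt{nr}$ from \Cref{lem:max-volineq} together with the identity $\widehat R_\epsilon(\widehat R_\epsilon+\mach)^\dagger=I-\mach(\widehat R_\epsilon+\mach)^\dagger$. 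You gesture at \Cref{lem:max-vol_fl}, but that lemma bounds $\|\Ah S(S^T\!\At S)^\dagger_\epsilon\|_2$, i.e.\ a product with the \emph{full} truncated pseudoinverse, not with the half-inverse $\widehat R_\epsilon^\dagger$; the needed estimate does not follow from it and must be proved separately (by a similar technique). As written, your proposal leaves this essential bound unestablished, so the "one more $\mach$" for the final product is not justified.
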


\begin{proof}
Recall that $fl(AS)= AS+\mach$. Thus, the overdetermined least-squares linear systems that we need to solve are of the form,
\begin{equation}
    \min_{x}\left\|x\widehat{R}_{\epsilon} - ([AS]_k+\mach)\right\|_2 
\end{equation} for $k = i,j$. Since the matrix $\widehat{R}_{\epsilon}$ is numerically full-rank, by \cite[Theorem 20.3]{HighamStabilityBook}, 
the computed solution satisfies
 \[
fl\left(\left([AS]_k+\mach\right)\widehat{R}_{\epsilon}^\dagger\right) = \left(\left([AS]_k+\mach\right)+\mach\right)\left(\widehat{R}_{\epsilon}+\mach\right)^\dagger = \left([AS]_k+\mach\right)\left(\widehat{R}_{\epsilon}+\mach\right)^\dagger.
\]
It remains to multiply $([AS]_i+\mach)\left(\widehat{R}_{\epsilon}+\mach\right)^\dagger$ and $\left(\widehat{R}_{\epsilon}+\mach\right)^{\dagger T}([AS]_j+\mach)^T$, that is
\begin{align*}
&fl\left(([AS]_i+\mach)\left(\widehat{R}_{\epsilon}+\mach\right)^\dagger \left(\widehat{R}_{\epsilon}+\mach\right)^{\dagger T}([AS]_j+\mach)^T\right) = \\
&\qquad = ([AS]_i+\mach)\left(\widehat{R}_{\epsilon}+\mach\right)^\dagger \left(\widehat{R}_{\epsilon}+\mach\right)^{\dagger T}([AS]_j+\mach)^T \\
&\qquad + \mach\left\| ([AS]_i+\mach)\left(\widehat{R}_{\epsilon}+\mach\right)^\dagger\right\|_2 \left\|([AS]_j+\mach)\left(\widehat{R}_{\epsilon}+\mach\right)^\dagger \right\|_2.
\end{align*}

We now prove that 
\begin{equation}\label{eq:tamed}
\left\| ([AS]_k+\mach)\left(\widehat{R}_{\epsilon}+\mach\right)^\dagger\right\|_2= \text{\O}(1)
\end{equation}
for $k=1, \dots, n$. Indeed

\begin{align*}
\left\| ([AS]+\mach)\left(\widehat{R}_{\epsilon}+\mach\right)^\dagger \right\|_2 &= \left\|(S^T\!Q)^{-1}\right\|_2 \left\|(S^T\!AS+\mach)\left(\widehat{R}_{\epsilon}+\mach\right)^\dagger\right\|_2\\
&= \text{\O}(1) \left\|\left(\widehat{R}_\epsilon^T \widehat{R}_\epsilon + \epsilon + \mach\right) \left(\widehat{R}_{\epsilon}+\mach\right)^\dagger\right\|_2 \\ 
& \leq \text{\O}(1) \left(\left\| \left(\widehat{R}_\epsilon^T \widehat{R}_\epsilon \right)  \left(\widehat{R}_\epsilon+\mach\right)^\dagger\right\|_2+\left\|(\epsilon + \mach) \left(\widehat{R}_\epsilon+\mach\right)^\dagger\right\|_2\right).
\end{align*}
By Weyl's inequality,
\[
\sigma_k((S^T\!AS)_\epsilon)-\mach \leq \sigma_k((S^T\!AS)_\epsilon+\mach) \leq \sigma_k((S^T\!AS)_\epsilon) +\mach.
\]
Since $\sigma_{k}(\widehat{R}_\epsilon)= \sqrt{\sigma_{k}((S^T\!AS)_\epsilon +\mach})$, we obtain
\[
\sigma_{\min}(\widehat{R}_\epsilon-\mach)\geq \sigma_{\min}(\widehat{R}_\epsilon)-\mach\geq \sqrt{\epsilon-\mach}-\mach.
\]
Thus 
\begin{align*}
    &\left\| \left(\widehat{R}_\epsilon^T \widehat{R}_\epsilon \right) \left(\widehat{R}_\epsilon+\mach\right)^\dagger\right\|_2\leq \|\widehat{R}_\epsilon\|_2 \|\widehat{R}_\epsilon \left(\widehat{R}_\epsilon+\mach\right)^\dagger\|_2\\
    &\qquad\leq  \|\widehat{R}_\epsilon\|_2 \|(\widehat{R}_\epsilon +\mach-\mach)\left(\widehat{R}_\epsilon+\mach\right)^\dagger\|_2= \|\widehat{R}_\epsilon\|_2 \|I- \mach\left(\widehat{R}_\epsilon+\mach\right)^\dagger\|_2 = \text{O}(1)
    \end{align*}
    and
    \begin{align*}
        \|(\epsilon + \mach) \left(\widehat{R}_\epsilon+\mach\right)^\dagger\|_2= \text{O}(1), 
    \end{align*} completing the proof of \eqref{eq:tamed}.

Finally, we prove that there exists a $\widetilde \epsilon \geq \epsilon - \mach$ such that
\[\left(\widehat{R}_{\epsilon}+\mach\right)^\dagger \left(\widehat{R}_{\epsilon}+\mach\right)^{\dagger T}  = (S^T\!AS+\mach)^\dagger_{\widetilde{\epsilon}}\] .\\

Recall that $\widehat{R}_{\epsilon}\in \mathbb{R}^{\hat{r}\times r}$, with $\hat{r}\leq r$ satisfies $\sigma_{k}(\widehat{R}_{\epsilon}^T\widehat{R}_{\epsilon})\geq \epsilon$ for all $k\leq \hat{r}$. Thus by Weyl's inequality, for all $k\leq \hat{r}$
\[
\sigma_{k}\left(\left(\widehat{R}_{\epsilon}+\mach\right)^T\left(\widehat{R}_{\epsilon}+\mach\right)\right) = \sigma_{k}\left(\widehat{R}_{\epsilon}^T\widehat{R}_{\epsilon}+\mach\right)\geq \epsilon - \mach.
\]
At the same time, being product of two rank $\hat{r}$ matrices,  for all $k>\hat{r}$
\[\sigma_{k}\left(\left(\widehat{R}_{\epsilon}+\mach\right)^T\left(\widehat{R}_{\epsilon}+\mach\right)\right)=0.\]
Hence
\begin{align*}
\left(\widehat{R}_{\epsilon}+\mach\right)^\dagger \left(\left(\widehat{R}_{\epsilon}+\mach\right)^{\dagger}\right)^T  &= \left(\left(\widehat{R}_{\epsilon}+\mach\right)^T \left(\widehat{R}_{\epsilon}+\mach\right)\right)^\dagger \\ &=\left(\left(\widehat{R}_{\epsilon}+\mach\right)^T \left(\widehat{R}_{\epsilon}+\mach\right)\right)_{\widetilde{\epsilon}}^\dagger \\ &= \left(S^T\! A S + \mach\right)^\dagger_{\widetilde{\epsilon}}.
\end{align*}

Combining all the steps, we arrive at the desired result.
\end{proof}

\noindent We now state the main stability result of SN. 

\begin{theorem}[Nystr\"om's stability]\label{thm:floating_point}
    Suppose we compute $AS(S^T\!AS)^\dagger_\epsilon S^T\! A$ as in \Cref{lem:floating_point_ij}, 
    then
    \[
    \|A-fl(AS(S^T\!AS)^\dagger_\epsilon S^T\! A)\|_F\leq n\sqrt{nr}\left(4n\sqrt{nr}(r+1)\sigma_{r+1}(A)+\epsilon + \mach \right) + \mach.
    \]
    \end{theorem}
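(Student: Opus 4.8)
The plan is to follow the proof of Theorem~\ref{thm:sn} almost line by line, substituting its exact-arithmetic ingredients (Lemmas~\ref{lem:max-vol} and~\ref{lem:eps_proj}) by their finite-precision analogues (Lemmas~\ref{lem:max-vol_fl} and~\ref{lem:projeqn}), and using Lemma~\ref{lem:floating_point_ij} to absorb all remaining roundoff into $\mach$ terms.

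\emph{Step 1: reduce to a perturbed projection.} Applying Lemma~\ref{lem:floating_point_ij} to every entry and reassembling, the computed approximant equals $(AS+\mach)\,X\,(AS+\mach)^T+\mach$, where $X:=(S^T\!AS+\mach)^\dagger_{\widetilde\epsilon}$ for some $\widetilde\epsilon\ge\epsilon-\mach$. The quantitative input here is $\|AS\,X\|_2=\bigO(\sqrt{nr})$: this is exactly the computation in Lemma~\ref{lem:max-vol_fl}, which writes $AS=QR$ for the \emph{exact} thin QR factor, inserts $(S^T\!Q)^{-1}(S^T\!Q)$, invokes $\|(S^T\!Q)^{-1}\|_2\le\sqrt{nr}$ from Lemma~\ref{lem:max-volineq}, and uses that $X$ is the pseudoinverse of a matrix whose nonzero singular values exceed $\widetilde\epsilon\gg\mach$. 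Expanding the product and discarding the three cross terms (each is $\mach$: one factor is $\mach$ and the remaining product has norm $\bigO(\sqrt{nr})$, or is $\bigO(1/\epsilon)\cdot\mach=\bigO(1)$ under the standing assumption $\epsilon\gg\mach$) gives
\begin{equation*}
fl\!\left(AS(S^T\!AS)^\dagger_\epsilon S^T\!A\right)=AS\,X\,(AS)^T+\mach=AS\,X\,S^T\!A+\mach=\mathcal{P}A+\mach ,
\end{equation*}
where $\mathcal{P}:=AS\,X\,S^T$ and we used $(AS)^T=S^T\!A$ since $A$ is symmetric. It remains to bound $\|(I-\mathcal{P})A\|_F$.

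\emph{Step 2: run the Theorem~\ref{thm:sn} splitting.} The finite-precision analogue of Lemma~\ref{lem:eps_proj}, obtained by the same argument as Lemma~\ref{lem:projeqn} (write $S^T\!AS=(S^T\!AS+\mach)-\mach$, note $X(S^T\!AS+\mach)$ is the orthogonal projector onto the retained eigenspace, and bound $\|AS\,\widetilde U_2\widetilde U_2^T\|_2$ through $\|(S^T\!Q)^{-1}\|_2$), gives $\mathcal{P}\,AS=AS\,X\,(S^T\!AS)=AS+\widetilde E$ with $\|\widetilde E\|_2\le 2\epsilon\sqrt{nr}+\mach$. With $M_S:=(Q^T\!S)^{-1}Q^T$ (well defined by Lemma~\ref{lem:max-volineq}), $Q^T(I-SM_S)=0$, and $SM_S$ an exact idempotent, one gets
\begin{equation*}
(I-\mathcal{P})A=(I-\mathcal{P})A(I-QQ^T)(I-SM_S)-\widetilde E\,M_S .
\end{equation*}
Now bound the factors: $\|I-\mathcal{P}\|_2\le 1+\|\mathcal{P}\|_2\le 1+\|AS\,X\|_2=\bigO(\sqrt{nr})$; $\|A(I-QQ^T)\|_F=\|(I-QQ^T)A\|_F\le n(r+1)\sigma_{r+1}(A)$ from the proof of Lemma~\ref{lem:relerr}; $\|I-SM_S\|_2=\|SM_S\|_2\le\sqrt{nr}$; and $\|M_S\|_2\le\|(Q^T\!S)^{-1}\|_2\le\sqrt{nr}$. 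Substituting, adding back the $\mach$ from Step~1, and simplifying with $\|\cdot\|_F\le\sqrt{n}\,\|\cdot\|_2$ (or $\|\cdot\|_F\le n\,\|\cdot\|_{\max}$) where convenient and being generous with low-degree polynomial constants, produces the stated bound $n\sqrt{nr}\big(4n\sqrt{nr}(r+1)\sigma_{r+1}(A)+\epsilon+\mach\big)+\mach$.

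\emph{The main obstacle} is disciplined bookkeeping of where the locally max-vol hypothesis may be used: only through the orthonormal factor $Q$ of the \emph{exact} matrix $AS$ and the bound $\|(S^T\!Q)^{-1}\|_2\le\sqrt{nr}$ (Lemma~\ref{lem:max-volineq}), since the perturbed matrices $\widehat A,\widetilde A$ that appear in finite precision are not known to be locally max-vol for their column selections. Lemmas~\ref{lem:max-vol_fl} and~\ref{lem:projeqn} are designed so that every appeal to local max-vol is routed back to $AS=QR$, and this proof must preserve that. A second, minor point: unlike the exact projector $P^\epsilon_{AS,S}$, the computed $\mathcal{P}$ is only an \emph{approximate} oblique projector ($\mathcal{P}^2-\mathcal{P}$ has norm of order $\mach/\epsilon$, not negligible), so one settles for $\|I-\mathcal{P}\|_2\le 1+\|\mathcal{P}\|_2$ instead of the equality $\|I-P\|_2=\|P\|_2$ available in Theorem~\ref{thm:sn} --- this is the reason the finite-precision bound carries slightly larger polynomial prefactors. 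Everything else (expanding products, treating $\mach/\epsilon$-type quantities as $\bigO(1)$, converting between the spectral, Frobenius, and max norms) is routine.
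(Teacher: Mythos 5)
Your proposal is correct and follows essentially the same route as the paper: reduce the computed quantity via Lemma~\ref{lem:floating_point_ij} to a perturbed projection, apply the finite-precision analogues (Lemmas~\ref{lem:max-vol_fl} and~\ref{lem:projeqn}) inside the splitting of Theorem~\ref{thm:sn} with $M_S=(Q^T\!S)^{-1}Q^T$, bound $\|I-\mathcal{P}\|_2\le 1+\|\mathcal{P}\|_2$ exactly as the paper does, and convert to the Frobenius norm at the end. Your only deviations are cosmetic bookkeeping (absorbing the $\mach$ perturbations of the outer factors before, rather than after, the splitting), and your remarks about routing every max-vol appeal through the exact $Q$ match the paper's treatment.
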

\begin{proof}
    For shorthand let $A_{N,\epsilon} = (AS+\mach)(S^T\!AS +\mach)_{\widetilde\epsilon}^\dagger (S^T\! A+\mach)$.
    By \Cref{lem:projeqn}
    \begin{equation*}
        A_{N,\epsilon}S = AS +\widetilde\epsilon.
    \end{equation*} Following the proof of \Cref{thm:sn}, we obtain
    \begin{align*}
        A-A_{N,\epsilon} &= (A-A_{N,\epsilon})(I-S(Q^TS)^\dagger Q^T) +{\widetilde\epsilon}(Q^T\!S)^\dagger Q^T \\
        &= (A-A_{N,\epsilon})(I-QQ^T)(I-S(Q^T\!S)^\dagger Q^T) +\widetilde\epsilon \sqrt{nr}
    \end{align*}since $\|{(Q^T\!S)^\dagger}\|_2 \leq \sqrt{nr}$ by \Cref{lem:max-volineq}. Now note that
    \begin{align*}
        A-A_{N,\epsilon} &=  A-(AS+\mach)(S^T\!AS +\mach)_{\widetilde\epsilon}^\dagger (S^T\! A+\mach) \\
        &= (I-(AS+\mach)(S^T\!AS +\mach)_{\widetilde\epsilon}^\dagger S^T)A +(AS+\mach)(S^T\!AS +\mach)_{\widetilde\epsilon}^\dagger\mach \\
        &= \left(I-(AS+\mach)(S^T\!AS +\mach)_{\widetilde\epsilon}^\dagger S^T\right)A +\mach
    \end{align*} since $\|{(AS+\mach)(S^T\!AS +\mach)_\epsilon^\dagger}\|_2 \leq 1+2\sqrt{nr}$ by \Cref{lem:max-vol_fl}. Therefore, 
    \begin{equation*} 
        A-A_{N,\epsilon} = \left(I-(AS+\mach)(S^T\!AS +\mach)_{\widetilde\epsilon}^\dagger S^T\right)A(I-QQ^T)(I-S(Q^T\!S)^\dagger Q^T) +\mach + {\widetilde\epsilon} \sqrt{nr}.
    \end{equation*} 
Hence
    \begin{align*}
        \|{A-A_{N,\epsilon}}\|_2 &\leq \|{\left(I-(AS+\mach)(S^T\!AS +\mach)_{\widetilde\epsilon}^\dagger S^T\right)}\|_2\|{A(I-QQ^T)}\|_2\|{(I-S(Q^T\!S)^\dagger Q^T)}\|_2\\
        & + \mach +{\widetilde\epsilon} \sqrt{nr}\\
        &\leq \left(1+(1+2\sqrt{nr})\right)(n(r+1)\sigma_{r+1}(A))\sqrt{nr} +{\widetilde\epsilon} \sqrt{nr} +\mach
    \end{align*} by \Cref{lem:max-vol_fl,lem:relerr,lem:max-volineq} respectively, for the first three terms in the final inequality. Therefore,
    \begin{align*}
        \|{A-A_{N,\epsilon}}\|_2 &\leq 4\sqrt{nr}(n(r+1)\sigma_{r+1}(A))\sqrt{nr} +{\widetilde\epsilon} \sqrt{nr}+ \mach\\
        &= (4n\sqrt{nr}(r+1)\sigma_{r+1}(A)+\widetilde\epsilon)\sqrt{nr} + \mach\\
    \end{align*}

In particular
\[
        \left|A_{ij}-\left[fl\left(A_N^{(\epsilon)}\right)\right]_{ij}\right| < (4n\sqrt{nr}(r+1)\sigma_{r+1}(A)+\widetilde\epsilon)\sqrt{nr} + \mach.
\]
    Therefore,
    \begin{align*}
        \|{A-fl\left(A_N^{(\epsilon)}\right)}\|_F & = \sqrt{\sum_{i = 1}^n \sum_{j = 1}^n \left|A_{ij}-\left[fl\left(A_N^{(\epsilon)}\right)\right]_{ij}\right|^2} \\
        &\leq \sqrt{\sum_{i = 1}^n \sum_{j = 1}^n \left(4n\sqrt{nr}(r+1)\sigma_{r+1}(A)+\widetilde\epsilon)\sqrt{nr} + \mach\right)^2} \\
        & = \sqrt{\sum_{i = 1}^n \sum_{j = 1}^n (4n\sqrt{nr}(r+1)\sigma_{r+1}(A)+\widetilde\epsilon)\sqrt{nr})^2+\mach} \\
        &= n\sqrt{nr}\left(4n\sqrt{nr}(r+1)\sigma_{r+1}(A)+\epsilon+\mach \right) + \mach,
    \end{align*} where in the third line we use the assumption that $\sigma_{r+1}(A)\leq \|{A}\|_2 = 1$.
\end{proof}
The inequality in \Cref{thm:floating_point} shows that, when using a locally max-vol indices, the error incurred in computing the \Nystrom approximation in finite-precision arithmetic is bounded---up to a small polynomial factor---by the error obtained in exact arithmetic. This establishes the numerical stability of the method when implemented as described in \Cref{lem:floating_point_ij}.

\section{Numerical Experiments and Implementation Details}
In this section, we illustrate the stability of the Nystr\"om method through numerical experiments. As outlined in \Cref{sec:intro}, we implement the stabilized algorithm described in \Cref{Alg:stableNystrom} by truncating the small eigenvalues in the core matrix via pivoted Cholesky, using the \texttt{cholp} function from Higham's Matrix Computation Toolbox \cite{cholpcode}. Provided the core matrix admits a Cholesky factorization\footnote{In the case when Cholesky fails, the eigendecomposition of the core matrix can be used instead where the negative eigenvalues are set to zero.}, \Cref{Alg:stableNystrom} satisfies the conditions needed for our stability theory so it is numerically stable, whereas \Cref{alg:NaiveNystrom} may not be.

We first compare various different implementations of the Nystr\"om method in \Cref{subsec:implementation} to observe their empirical stability. We then compare the naive Nystr\"om method (\Cref{alg:NaiveNystrom}), stabilized Nystr\"om method (\Cref{Alg:stableNystrom}), and the shifted variant from \cite{CarsonDauvzickaite2024,TroppYurtseverUdellCevher2017spsd} using kernel matrices of various datasets in \Cref{subsec:kernelexp}. In all the experiments, we select the index set $J$ with a strong rank-revealing QR factorization, whose pivoting steps aim to locally maximize the volume\footnote{The MATLAB code for the strong rank-revealing QR is taken from \cite{sRRQRcode}.}, and take $10u\norm{A}_2$ as the shift and truncation parameter where $u$ is the unit roundoff. We use the best rank-$r$ approximation using the truncated SVD (TSVD) as reference and plot the relative error in the Frobenius norm, i.e., if $\hat{A}$ is a rank-$r$ approximation then $\displaystyle\|A-\hat{A}\|_F/\|A\|_F$ is plotted. The experiments were conducted in MATLAB R2024b using double precision arithmetic.

\subsection{Implementation details} \label{subsec:implementation}
The main numerical challenge in the Nystr\"om method lies in forming the pseudoinverse of the core matrix $S^T\!AS$, which can be extremely ill-conditioned. When $A$ is low-rank and $S$ selects a good set of columns/rows, the smallest singular value of $S^T\!AS$ is smaller than $\sigma_r(A)$ by Courant-Fischer theorem, so any instability in the pseudoinverse is amplified. To investigate this issue, we explore five different implementations of the Nystr\"om method in MATLAB:
\begin{enumerate}
    \item \texttt{plain}: $A\approx AS(S^T\!AS)^\dagger S^T\!A$ implemented as in \Cref{alg:NaiveNystrom},
    \item \texttt{shift}: Shifted version implemented as in \cite{CarsonDauvzickaite2024,TroppYurtseverUdellCevher2017spsd},
    \item \texttt{trunc}: Stabilized Nystr\"om method implemented as in \Cref{Alg:stableNystrom},
    \item \texttt{backslash}: implemented as $(AS)*((S^T\!AS)\backslash (AS)^T)$,
    \item \texttt{pinv}: implemented as $(AS) * \texttt{pinv}(S^T\!AS) *(AS)^T$.
\end{enumerate}
We test these five implementations using two symmetric positive semidefinite (SPSD) test matrices:
\begin{enumerate}
    \item \texttt{SNN}: Random sparse non-negative matrices \cite{SorensenEmbree2016,VoroninMartinsson2017}. We use
    \begin{equation*}
        \texttt{SNN} = \sum_{j = 1}^{150} \frac{1}{j} x_j x_j^T + \sum_{j = 151}^{350} \frac{10^{-5}}{j} x_j x_j^T + \sum_{j = 351}^{500} \frac{10^{-10}}{j} x_j x_j^T \in \Re^{1000\times 1000},
    \end{equation*} where $x_j$'s are computed in MATLAB using the command $\texttt{sprandn(1000,1,0.01)}$.
    \item \texttt{ex33}: $1733\times 1733$ matrix from the SuiteSparse Matrix Collection \cite{FloridaDataset2011}.
\end{enumerate}

\begin{figure}[!ht]
\subfloat[\centering \texttt{SNN}]{\label{subfig:impl1}\includegraphics[scale = 0.9]{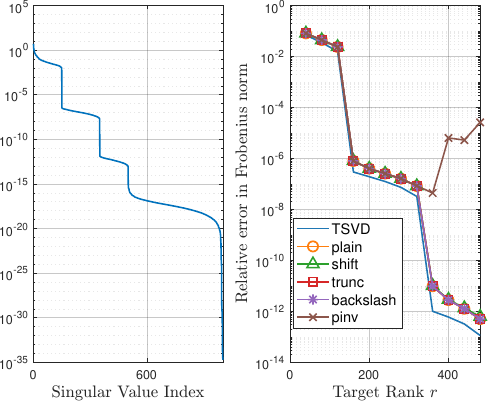}} \hfill
\subfloat[\centering \texttt{ex33}]{\label{subfig:impl2}\includegraphics[scale = 0.9]{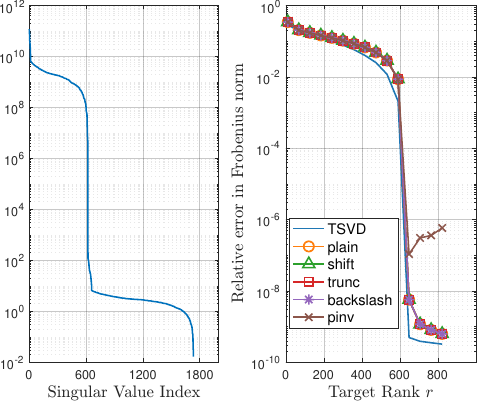}} 
\centering 
\caption{All the different implementations appear to be numerically stable except for the $\texttt{pinv}$ implementation.}
\label{fig:Implementation}
\end{figure}

\Cref{fig:Implementation} summarizes the results. The \texttt{pinv} implementation becomes unstable even at moderate rank values: although the $r$th singular value of $A$ is still sufficiently larger than machine precision ($\approx 2.2\times 10^{-16}$ in double precision), the error spikes and becomes unstable. In contrast, the other four implementations closely follow the truncated SVD error and behaves in a numerically stable manner. We note, however, that in the kernel experiments (see \Cref{subsec:kernelexp}), if a poor set of indices is chosen, the core matrix can become extremely ill-conditioned and the \texttt{backslash} implementation may fail in a similar manner to the \texttt{plain} version. When a well-chosen set of indices is used, the method is surprisingly stable. The MATLAB's backslash solver (\texttt{backslash} implementation) should nevertheless be used with care on numerically rank-deficient underdetermined systems, as it returns a sparse solution based on a pivoting strategy~\cite[\S~2.4]{anderson1995lapack}, which may differ from the minimum-norm solution and therefore might not satisfy the assumptions in our analysis (\Cref{sec:stability of MLN}). We recommend our proposed approach when a theoretical guarantee of stability is desired.

\subsection{Comparison between different implementations using Kernels} \label{subsec:kernelexp}
In this section, we compare four Nystr\"om approximations--the plain Nystr\"om method (\Cref{alg:NaiveNystrom}), the stabilized Nystr\"om method (\Cref{Alg:stableNystrom}), the shifted Nystr\"om method \cite{CarsonDauvzickaite2024,TroppYurtseverUdellCevher2017spsd}, and the backslash implementation--on kernel matrices derived from a range of datasets in LIBSVM \cite{LIBSVM} and the UCI Machine Learning Repository \cite{UCIML}.

The datasets used are summarized in \Cref{tab:datasets}. For each dataset, we draw a subsample of $n = 2000$ points, and normalize each feature to have zero mean and unit variance. We then form an RBF kernel $K(x_i,x_j) = \exp\left(-\frac{\norm{x_i-x_j}_2^2}{2\sigma^2}\right)$ where $\sigma$ is the bandwidth. Larger bandwidths yield smoother kernels with rapid singular-value decay (i.e., lower effective rank), whereas small bandwidths produce kernels of higher rank with larger trailing singular values.

\begin{table}[t]
\centering
\caption{Overview of datasets used in our experiments. Listed are the number of samples $n$, the feature dimension $d$, and the source repository.}
\label{tab:datasets}
\begin{tabular}{lrrl}
\toprule
Dataset & $n$ & $d$ & Source \\
\midrule
\texttt{a9a}                  & 32\,561   & 123   & LIBSVM \\
\texttt{Anuran Calls}         & 7\,195    & 22    & UCI \\
\texttt{cadata}               & 16\,512   & 8     & LIBSVM \\
\texttt{CIFAR10}              & 60\,000   & 3\,072 & \cite{Krizhevsky2009} \\
\texttt{cod-rna}              & 59\,535   & 8     & LIBSVM \\
\texttt{connect-4}            & 54\,045   & 126   & LIBSVM \\
\texttt{covertype}            & 581\,012  & 54    & UCI \\
\texttt{covtype.binary}       & 464\,809  & 54    & LIBSVM \\
\texttt{ijcnn1}               & 49\,990   & 22    & LIBSVM \\
\texttt{phishing}             & 8\,844    & 68    & LIBSVM \\
\texttt{sensit\_vehicle}      & 78\,823   & 100   & LIBSVM \\
\texttt{sensorless}           & 58\,509   & 48    & LIBSVM \\
\texttt{skin\_nonskin}        & 196\,045  & 3     & LIBSVM \\
\texttt{w8a}                  & 49\,749   & 300   & LIBSVM \\
\texttt{YearPredictionMSD}    & 463\,715  & 90    & LIBSVM \\
\bottomrule
\end{tabular}
\end{table}

\begin{figure}[!ht]
\vspace{-0.5cm}
\subfloat[\centering \texttt{ijcnn1}]{\label{subfig:3ijcnn1}\includegraphics[scale = 0.5]{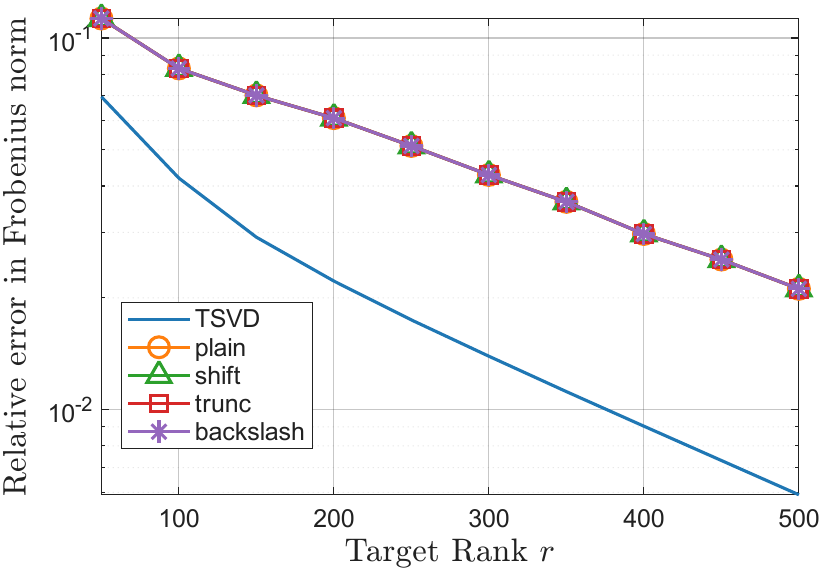}} \hfill
\subfloat[\centering \texttt{skin\_nonskin}]{\label{subfig:3skinnonskin}\includegraphics[scale = 0.5]{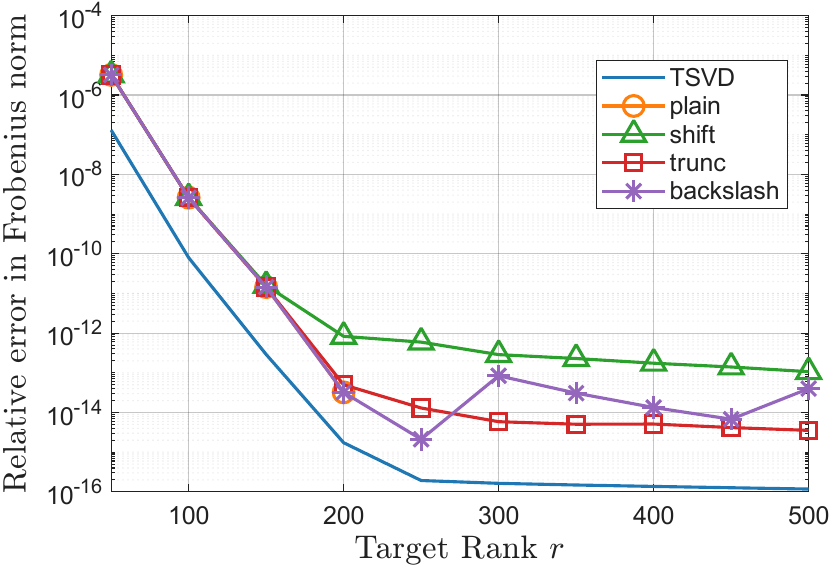}} 
\centering 
\caption{Relative error in Frobenius norm versus target rank $r$ for kernels with $\sigma = 3$. Left (\texttt{ijcnn1}): all four Nystr\"om variants achieve virtually identical accuracy.  Right (\texttt{skin\_nonskin}): the kernel becomes ill‑conditioned as $r$ grows; the plain method fails beyond $r=240$, and the stabilized method outperforms the shifted variant by roughly two orders of magnitude, while the backslash implementation lies in between the two.}
\label{fig:kerneltest1}
\end{figure}

\paragraph{Moderate Bandwidth ($\sigma = 3$)} We conducted experiments on all datasets but show results for two representative datasets in \Cref{fig:kerneltest1}, using a fixed bandwidth of $\sigma = 3$. For \texttt{ijcnn1} (\Cref{subfig:3ijcnn1}), the error curves of the plain, shifted, truncated, and backslash implementations are essentially identical across all ranks. The same pattern is observed on the other datasets not shown in the figure. This occurs because, with this bandwidth, the kernel matrices retain relatively large singular values, and the pseudoinversion of the core matrix remains stable. In contrast, the \texttt{skin\_nonskin} dataset (\Cref{subfig:3skinnonskin}) behaves differently: as $r$ increases, the matrix becomes ill-conditioned, the relative error drops to nearly the level of machine precision, and the plain Nystr\"om method fails beyond $r = 240$ as the Cholesky factorization breaks down. In this regime, the truncated implementation (\Cref{Alg:stableNystrom}) achieves an error that is roughly two order of magnitude smaller than the shifted variant, while the backslash implementation performs in between the two. To test the methods on more strongly low-rank kernels, we next increase the bandwidth to $\sigma = 30\sqrt{d}$, where $d$ denotes the feature dimension.

\paragraph{Large Bandwidth ($\sigma = 30\sqrt{d}$)} For the second experiment, we choose $\sigma = 30\sqrt{d}$, making the kernels closer to low-rank. The results are summarized in \Cref{fig:kerneltest2}. For \texttt{ijcnn1}, the four methods again produce essentially indistinguishable accuracy across all ranks. We observe the same behavior for all the other datasets not shown here. The remaining plots show that for \texttt{skin\_nonskin}, \texttt{cod-rna}, and \texttt{cadata}, the plain Nystr\"om method fails due to Cholesky breakdown and the stabilized algorithm consistently achieves errors that are one to two orders of magnitude smaller than the shifted variant for larger ranks. The backslash implementation shows more irregular behavior, sometimes performing better or worse than the shifted and truncated methods.

\begin{figure}[!ht]
\vspace{-0.5cm}
\subfloat[\centering \texttt{ijcnn1}]{\label{subfig:30ijcnn1}\includegraphics[scale = 0.5]{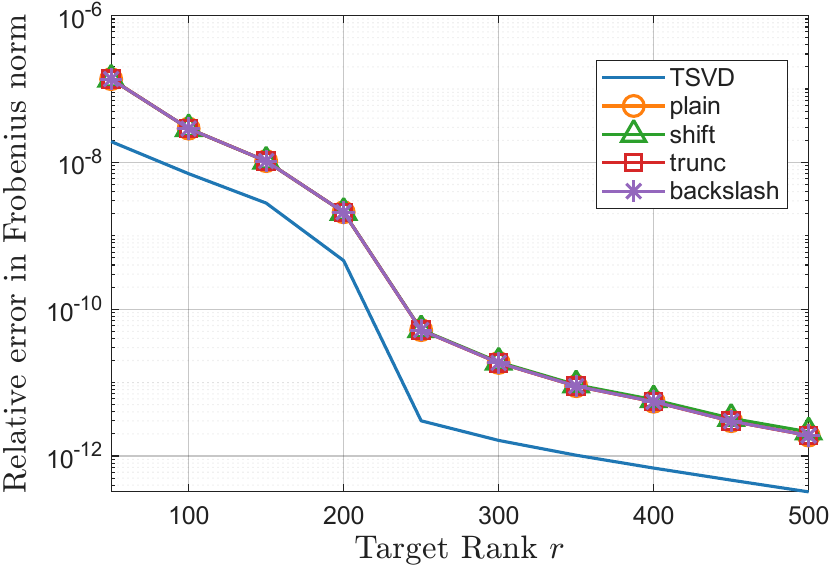}}\hfill
\subfloat[\centering \texttt{skin\_nonskin}]{\label{subfig:30skinnonskin}\includegraphics[scale = 0.5]{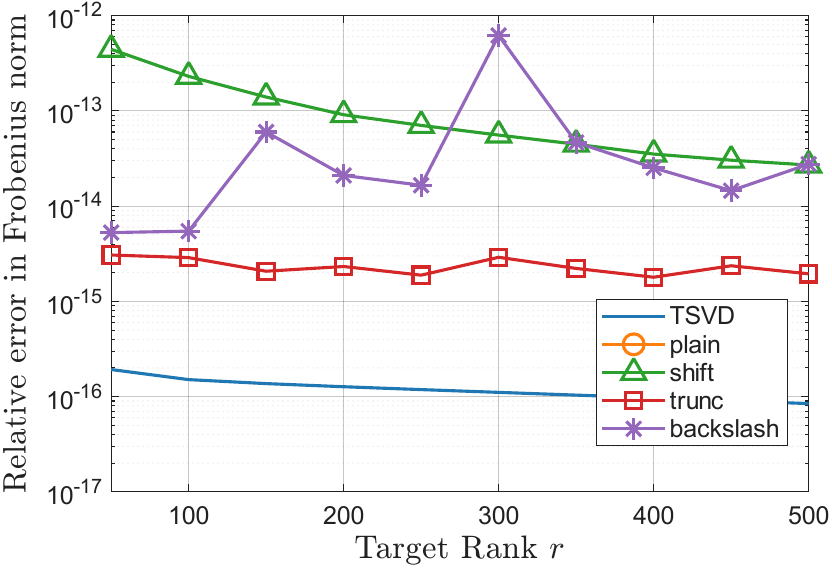}} \vspace{-0.8cm} \\ 
\subfloat[\centering \texttt{cod-rna}]{\label{subfig:30cod-rna}\includegraphics[scale = 0.5]{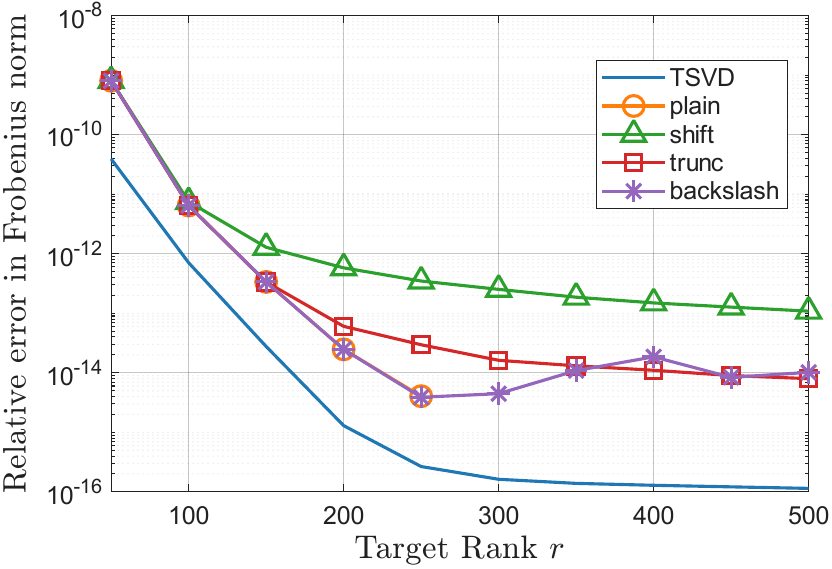}} \hfill
\subfloat[\centering \texttt{cadata}]{\label{subfig:30cadata}\includegraphics[scale = 0.5]{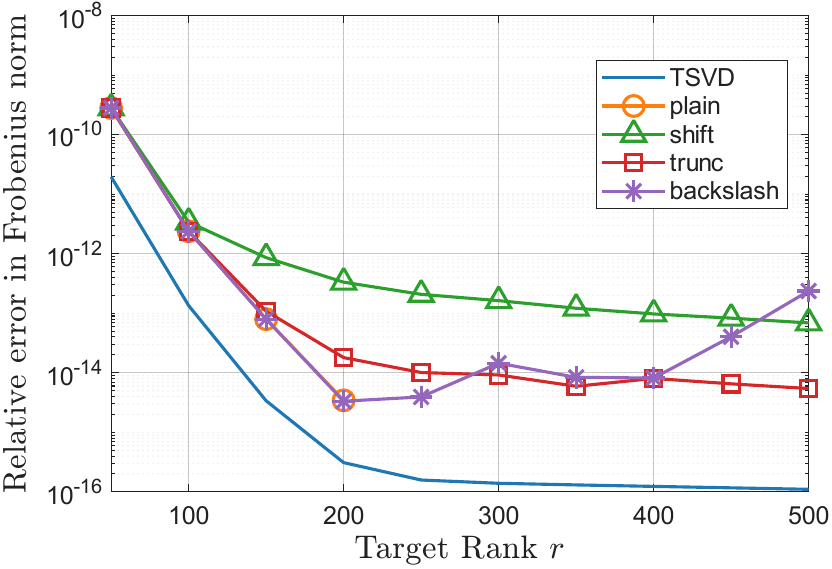}}
\centering 
\vspace{-0.2cm}
\caption{Relative approximation error versus rank $r$ for kernels with $\sigma = 30\sqrt{d}$.  Top left (\texttt{ijcnn1}): all four methods again coincide. The remaining panels show the behavior for \texttt{skin\_nonskin}, \texttt{cod-rna}, and \texttt{cadata}: the plain method fails at moderate ranks due to breakdown in the Cholesky factorization, while the truncated method consistently achieves lower error than the shifted variant for larger target rank. The behaviour for the backslash implementation is more irregular.}
\label{fig:kerneltest2}
\end{figure}

\paragraph{Uniform sampling (poor index selection)} In the final experiment, we keep $\sigma = 30\sqrt{d}$, but use a potentially poor set of indices obtained via uniform column sampling. In this setting, the plain Nystr\"om method often fails due to Cholesky breakdown. When this occurs, the backslash implementation either fails as well or becomes unstable, producing errors several orders of magnitude larger than those of the shifted and truncated implementations. The shifted and truncated implementations yield essentially the same accuracy, and the observed instability and error stagnation in these methods are likely due to the use of a poor set of indices.

\begin{figure}[!ht]
\vspace{-0.5cm}
\subfloat[\centering \texttt{a9a}]{\label{subfig:a9a}\includegraphics[scale = 0.5]{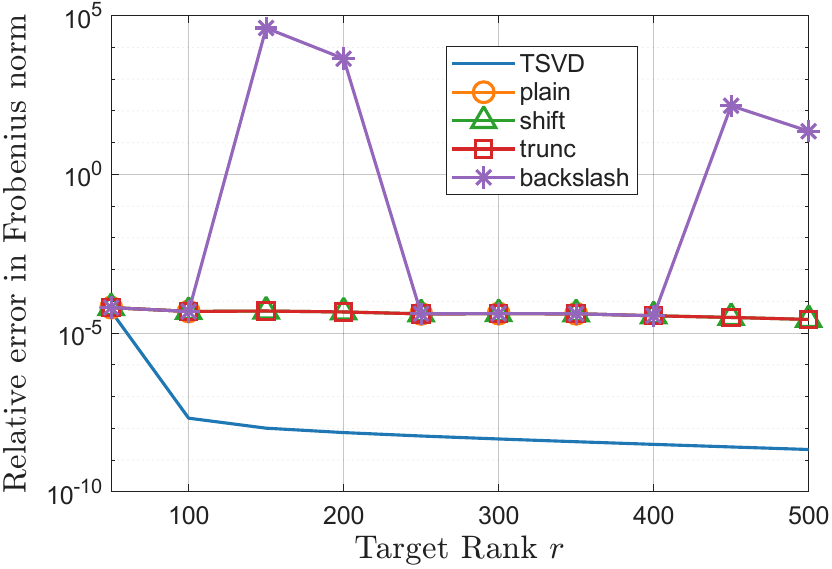}} \hfill
\subfloat[\centering \texttt{phishing}]{\label{subfig:phising}\includegraphics[scale = 0.5]{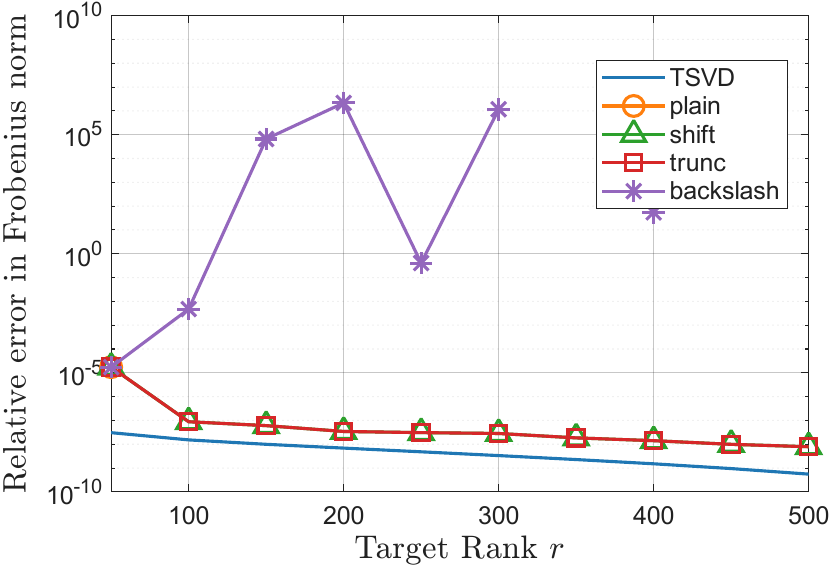}} 
\centering 
\vspace{-0.2cm}
\caption{Relative error in Frobenius norm versus target rank $r$ for kernels with $\sigma = 30\sqrt{d}$ using \emph{uniform} sampling. In both datasets, \texttt{a9a} and \texttt{phishing}, the plain and backslash implementations fail or become unstable, while the shift and truncated versions yield similar accuracy throughout.}
\label{fig:kerneltest3}
\end{figure}

Beyond the accuracy advantages shown, the stabilized Nystr\"om algorithm (\Cref{Alg:stableNystrom}) also offers lower computational cost than the shifted variant because it avoids the QR factorization of $AS$, as discussed in \Cref{subsec:contribution}. In double precision, the performance gap is visible only when the approximation error approaches machine precision; for moderate accuracy (i.e. for small $r$) all implementations give comparable results. However, for computations in lower precision, say single or half precision, the numerical accuracy is limited by a much larger machine precision, and the differences among the methods may become pronounced even when a relatively modest accuracy is acceptable.

\section*{Acknowledgments}
We thank Ethan Epperly for his helpful suggestions on the use of locally maximum-volume indices.

\bibliographystyle{siamplain}
\bibliography{references}

\end{document}